\def\i1n{i=1,\cdots,n}
\def\j1n{j=1,\cdots,n}
\def\ij1n{i,j=1,\cdots,n}
\def\R{\mathbb R}
\def \i{\mathrm i}
 \numberwithin{equation}{section}
\theoremstyle{definition}
 \newtheorem{thm}{\indent Theorem}[section]
 \newtheorem{lem}{\indent Lemma}[section]
\theoremstyle{definition}
\theoremstyle{theorem}
\theoremstyle{lemma}
\newcommand{\md}{\mbox{d}}
\newcommand{\be}{\begin{equation}}
\newcommand{\ee}{\end{equation}}
\newcommand{\beq}{\begin{equation*}}
\newcommand{\eeq}{\end{equation*}}
\begin{document}

\title{Global solutions of 2D non-resistive MHD system with constant background magnetic vorticity}


\author{Yuanyuan Qiao\thanks{School of Mathematics Science, Fudan University, Shanghai, P. R. China (20110180038@fudan.edu.cn).}
\and Yi Zhou\thanks{School of Mathematics Science, Fudan University, Shanghai, P. R. China (yizhou@fudan.edu.cn).}}

\date{}

\maketitle

\begin{abstract}
This paper investigates the stabilization effect of a background magnetic vorticity on electrically conducting fluids.
By exploring the dissipation nature of the linearized equations, we prove the global existence of smooth solutions to the two-dimensional incompressible viscous and non-resistive MHD system.
\end{abstract}

\textbf{MSC(2020)} 35Q35, 35B65, 76E25.

\textbf{Keywords}: MHD system, smooth solutions, background magnetic vorticity.

\section{Introduction}\label{intro}

The magnetohydrodynamics (MHD) equations are a coupling system of the Navier-Stokes
and Maxwell equations, which can describe the motion of strongly colliding plasma when magnetic diffusion is neglected.
This paper studies the global well-posedness of the following two-dimensional
incompressible viscous and non-resistive MHD system:
\be
\begin{cases}\label{***}
{ \begin{array}{ll} u_t+u\cdot\nabla u-\Delta u+\nabla (P+\frac{1}{2}|B|^2)=B\cdot\nabla B,
\quad (t,x)\in\mathbb{R}^+\times\mathbb{R}^2,\\
B_t+u\cdot\nabla B=B\cdot\nabla u, \\
\nabla\cdot u=\nabla\cdot B=0,\\
t=0: u=u_0(x), B=B_0(x).
 \end{array} }
\end{cases}
\ee
Here $u=(u_1,u_2)^\top$, $B=(B_1,B_2)^\top$ and $P$ represent the velocity field,
magnetic field and scalar pressure of the fluid, respectively.

It is well-known that the viscous and resistive MHD system is globally well-posed in two-dimensional space (see \cite{DL1972,ST1983,AP2008}).
Conversely, for the case of inviscid and non-resistive MHD equations, Bardos, Sulem and Sulem \cite{BSS1988} proved the global well-posedness of the system in the entire $\R^2$ or $\R^3$ when the initial data is close enough to a non-trivial equilibrium state (see \cite{CL2018} for more results).
For the case of inviscid and resistive MHD system (only magnetic diffusion),
Lei and Zhou \cite{LZ2009} obtained the global $H^1$ weak solution to the 2D MHD system
(also see \cite{CW2011}), but the global regularity is still a challenging problem.
Recently, Wei and Zhang
\cite{WZ2020} provided the global existence of small smooth solutions in $H^4(\mathbb{T}^2)$.
For more results on this dissipation case, please refer to the literature
\cite{ZZ2018,YY2022,CZZ2022}.

In the case of viscous and non-resistive MHD equations (the case considered in this paper),
the existence of smooth solutions for all time remains a challenging issue even in the two-dimensional whole space for generic smooth initial data.
Lei \cite{L2015} proved the global existence of classical
solutions in three-dimensional space for some axially symmetric initial data
without swirl components for the velocity field and magnetic vorticity field.
Initiated by Lin and Zhang \cite{LZ2014}, numerous efforts have been dedicated to studying the global well-posedness of the MHD system near a constant equilibrium (see \cite{LZ2015,LXZ2015,XZ2015,RWXZ2014,Z2014,HL2014} and the references cited there in).
Recently, using the Lagrangian formulation of the system, Abidi and Zhang \cite{AZ2017} proved the global well-posedness of the 3D MHD system with the initial data close enough to a constant equilibrium state $(u^{(0)}, B^{(0)})=(0, e_3)$. Deng and Zhang \cite{DZ2018} subsequently studied the large time behavior.
Under certain symmetry assumptions, Pan, Zhou and Zhu \cite{PZZ2018} established the global existence of small smooth solutions to the 3D MHD system near the constant equilibrium state on periodic boxes.
It is important to note that they worked in Euclidean coordinates and introduced the time-weighted energy method, which can also be applied to other equations such as the Oldroyd-B model, Euler-Poisson system and compressible MHD system (see \cite{Z2018,ZZ2019,WZ2022} for more details).
Very recently, Chen, Zhang and Zhou \cite{CZZ2022} proved the global well-posedness of the 3D MHD system in the torus $\mathbb{T}^3$ near a constant equilibrium state
satisfying the Diophantine condition.

In this paper we consider a non-constant equilibrium: $B^{(0)}=(x_2,-x_1)^\top$.
Since its vorticity is a non-zero constant, we refer to this equilibrium as a background magnetic vorticity.
We aim to establish the global well-posedness of two-dimensional incompressible viscous and non-resistive MHD system under this background magnetic vorticity.
Taking
$$u^{(0)}=0, \quad P^{(0)}=-|x|^2, \quad B^{(0)}=(x_2,-x_1)^\top,$$
it is easy to see that the steady state $(u^{(0)},P^{(0)},B^{(0)})$ is a special solution to the system (\ref{***}).
Setting $$p=P-P^{(0)}, \quad b=B-B^{(0)},$$ we then deduce the perturbation system of $(u,p,b)$:
\be\label{1.3}
\begin{cases}
{ \begin{array}{ll} u_t+u\cdot\nabla u-\Delta u+\nabla\pi=b\cdot\nabla b-\partial_\theta b,
\quad (t,x)\in\mathbb{R}^+\times\mathbb{R}^2,\\
b_t+u\cdot\nabla b+\nabla\psi=b\cdot\nabla u-\partial_\theta u, \\
\nabla\cdot u=\nabla\cdot b=0,\\
t=0: u=u_0(x), b=b_0(x),\\
 \end{array} }
\end{cases}
\ee
where $\pi=P+\frac{1}{2}|B|^2+\frac{1}{2}|x|^2-\phi$, $\phi$ and $\psi$ are scalar stream functions.
Divergence-free conditions imply the existence of $\phi$ and $\psi$. According to the two-dimensional Biot-Savart law and the notation $\nabla^\bot=(-\partial_2, \partial_1)^\top$, the velocity field $u$ and magnetic field $b$ can be denoted as:
\begin{eqnarray}\label{1.4}
u=\nabla^\bot\psi,\quad b=\nabla^\bot\phi.
\end{eqnarray}
From this, it is clear that $b\cdot\nabla B^{(0)}=\nabla\phi$ and $u\cdot\nabla B^{(0)}=\nabla\psi$, which indicate these two terms can be expressed as gradient forms.
And we can write them together with the pressure term.
In addition, one also verifies that
$B^{(0)}\cdot\nabla=x_2\partial_1-x_1\partial_2:=-\partial_\theta.$
Here and in what follows, we denote $\partial_i=\partial_{x_i}$ (where $i\in{1,2}$) and $\partial_\theta^2 f=\partial_{\theta\theta}f$.
The variables $\theta$ and $r$ (to be used later) represent spatial variables in polar coordinates.

Assuming that the initial data satisfies the following symmetry conditions:
\begin{align}\label{1.1}
\begin{aligned}
&u_{0,1}(x),\quad b_{0,2}(x)\mbox{ are odd functions of}\ x_1,\\
&u_{0,2}(x),\quad b_{0,1}(x)\mbox{ are even functions of}\ x_1,\\
&u_{0,2}(x),\quad b_{0,1}(x)\mbox{ are odd functions of}\ x_2,\\
&u_{0,1}(x),\quad b_{0,2}(x)\mbox{ are even functions of}\ x_2.
\end{aligned}
\end{align}
We then proceed to state our main result:
\begin{thm}\label{thm0.1}
Consider the system (\ref{1.3}) with the initial data satisfying the condition (\ref{1.1}).
If there exists a small constant $\varepsilon>0$, such that
\begin{eqnarray}\label{**}
\|u_0,b_0\|_{\dot{H}^{-\sigma}}+\|u_0,b_0\|_{H^{2s+6}}\leq\varepsilon,
\end{eqnarray}
then the system (\ref{1.3}) has a global smooth solution, provided $s\geq 2$ and $\frac{3}{23}\leq\sigma<1$.
\end{thm}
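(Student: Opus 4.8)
The plan is to prove Theorem \ref{thm0.1} by a continuity (bootstrap) argument: local well-posedness provides a smooth solution on a maximal time interval, and one closes a global \emph{a priori} bound on a carefully designed energy by combining a dissipative energy inequality with algebraic-in-time decay extracted from the negative Sobolev norm. I would first record the algebra of the operator $\partial_\theta=x_1\partial_2-x_2\partial_1$: it is skew-adjoint, commutes with $\Delta$ and with the Leray projection, its commutators with the ordinary derivatives stay at the same order ($[\partial_\theta,\partial_1]=-\partial_2$, $[\partial_\theta,\partial_2]=\partial_1$), and it commutes with $\mathrm{curl}$ on divergence-free fields. Thus $\partial_\theta$ is an admissible ``rotation'' derivative that may be placed in the energy alongside $\partial^\alpha$. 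I would also verify that the parities (\ref{1.1}) are preserved by (\ref{1.3}); written in stream functions this forces $\psi$ to be odd in both variables and $\phi$ to be even in both, so that the velocity vorticity $\omega=\Delta\psi$ carries only angular frequencies $m\ge 2$ and $\partial_\theta$ is coercive on it.

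Next I would isolate the linear dissipation. Taking the curl of the linear part of (\ref{1.3}) removes the pressure and stream terms and yields
\begin{align}\label{plan-lin}
\omega_t-\Delta\omega+\partial_\theta j=0,\qquad j_t+\partial_\theta\omega=0,
\end{align}
equivalently the damped wave equation $\omega_{tt}-\Delta\omega_t-\partial_\theta^2\omega=0$. The basic energy identity only sees the viscous dissipation of $u$, so to recover dissipation of the magnetic field I would introduce a Kawashima-type interaction functional, the simplest candidate being $\mathcal I=\int u\cdot\partial_\theta b\,dx$ (and its higher-order analogues $\int\partial^\alpha u\cdot\partial_\theta\partial^\alpha b\,dx$). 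Differentiating in time, using the equations, the skew-adjointness of $\partial_\theta$, the commutators above, and the identity $\nabla\cdot(\partial_\theta b)=-j$, one obtains an estimate of the form
\begin{align}\label{plan-I}
\frac{d}{dt}\mathcal I\le -c\,\|\partial_\theta b\|_{L^2}^2+C\big(\|\nabla u\|_{L^2}^2+\text{l.o.t.}\big)+\mathcal N,
\end{align}
where the lower-order and nonlocal remainders (e.g. the pressure contribution $\int\pi\,j$, which reduces to a negative-order term in $j$) are absorbed by the viscous dissipation and the negative norm, and $\mathcal N$ collects the nonlinear terms. Adding $\delta\mathcal I$ to the natural high-order energy then produces a coercive dissipation $\mathcal D\simeq\|\nabla u\|_{H^{N}}^2+\|\partial_\theta b\|_{H^{N-1}}^2$ with $N=2s+6$.

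I would then carry out the full nonlinear estimate at the $H^{2s+6}$ level, enlarging the energy to include $\partial_\theta$-weighted norms, and prove $\frac{d}{dt}\mathcal E+\mathcal D\lesssim \sqrt{\mathcal E}\,\mathcal D+(\text{lower order})$, so that the smallness of $\mathcal E(0)$ propagates. Because the induced magnetic dissipation is only $\|\partial_\theta b\|$ (one angular derivative, and nothing at all on the radial mode of the current), the energy inequality does not close by itself; this is where the negative norm enters. I would show the $\dot H^{-\sigma}$ bound is propagated and then interpolate, via $\|\Lambda^\ell f\|_{L^2}\lesssim\|f\|_{\dot H^{-\sigma}}^{\beta}\|f\|_{\dot H^{\ell'}}^{1-\beta}$, to obtain algebraic decay of $\mathcal E$. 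The constraints $\sigma\ge 3/23$ and $s\ge 2$ are precisely what make the slowest-decaying nonlinear contribution integrable in time and furnish the regularity needed for the product, commutator and $L^\infty$ (Sobolev) estimates in two dimensions.

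The main obstacle is the variable-coefficient, \emph{unbounded} operator $\partial_\theta$, whose coefficients grow like $|x|$. Its good commutation properties keep the energy manipulations manageable, but the negative-Sobolev estimate---intrinsically a Fourier/translation-invariant tool---must be executed in the presence of these growing coefficients, and the interaction functional (\ref{plan-I}) must be propagated to top order while preserving the delicate balance between $\|\partial_\theta b\|$ and the viscous dissipation. The most delicate point is the undamped radial mode of the current $j$: since the system is non-resistive there is no magnetic diffusion, $\partial_\theta$ annihilates this mode, and its control must come entirely from the nonlinear structure together with the negative-norm decay. Taming this mode while keeping every nonlinear term time-integrable is what I expect to be the crux of the argument.
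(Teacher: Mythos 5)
Your overall architecture (parity conditions preserved in time, hence a Poincar\'e inequality making $\partial_\theta$ coercive; a damped-wave structure behind the linearization; a negative Sobolev norm plus interpolation to gain time-integrability; a bootstrap to close) matches the paper's. But the central mechanism you propose for extracting magnetic dissipation does not work as stated. Your Kawashima-type functional $\mathcal I=\int u\cdot\partial_\theta b\,dx$ gives, on the linear level,
\begin{align*}
\frac{d}{dt}\mathcal I=-\|\partial_\theta b\|_{L^2}^2+\|\partial_\theta u\|_{L^2}^2
+\langle\Delta u,\partial_\theta b\rangle+(\text{pressure/stream terms})+\mathcal N ,
\end{align*}
and the term $+\|\partial_\theta u\|_{L^2}^2$ cannot be absorbed by the viscous dissipation: $\partial_\theta=x_1\partial_2-x_2\partial_1$ has coefficients growing like $|x|$, so $\|\partial_\theta u\|_{L^2}$ is \emph{not} bounded by $\|\nabla u\|_{H^N}$ for any $N$. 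This is precisely the point where the classical Kawashima argument (which relies on the compensating operator being a bounded Fourier multiplier) breaks down here, and your inequality (\ref{plan-I}) is false as written. The paper's way around this is to pass to the second-order-in-time formulation $u_{tt}-\Delta u_t-\partial_{\theta\theta}u+\nabla q_1=\dots$ (and the analogous equation for $b$) and to pair it with $-\Delta^{2m+1}(u,b)$: there \emph{both} $\|\partial_\theta u\|_{\dot H^{2m+1}}^2$ and $\|\partial_\theta b\|_{\dot H^{2m+1}}^2$ appear with the good sign, and the only bad term is $-\|u_t,b_t\|_{\dot H^{2m+1}}^2$, which is absorbed by the companion estimate obtained from pairing with $(\Delta^{2m}u_t,\Delta^{2m}b_t)$. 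In other words, the correct "cross functional" is $-\langle\Delta^{2m+1}u,u_t\rangle-\langle\Delta^{2m+1}b,b_t\rangle$, not $\int u\cdot\partial_\theta b$.

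Two further points. First, your curl formulation in $(\omega,j)$ creates the very difficulty you flag at the end: $j$ is even in both variables, so its angular mean need not vanish and $\partial_\theta$ is not coercive on it. The paper never meets this problem because it works with $(u,b)$ componentwise, for which the parities (\ref{1.1}) force $\int_0^{2\pi}u\,d\theta=\int_0^{2\pi}b\,d\theta=0$ and hence $\|b\|_{H^k}\lesssim\|\partial_\theta b\|_{H^k}$; the current is then controlled through $\|\nabla b\|\lesssim\|\partial_\theta b\|_{\dot H^1}$ rather than through its own angular structure. Second, "interpolate to obtain algebraic decay of $\mathcal E$" is too vague to close the key term $\int_0^t\|b\|_{\dot H^{2s+6}}^2\|\nabla u\|_{L^\infty}\,d\tau$: the paper needs explicit time-weighted energies $(1+\tau)^2\|\partial_\theta u\|_{\dot H^{2m+1}}^2$ at intermediate orders together with $\|u\|_{L_t^2\dot H^{1-\sigma}}$, and the exponent balance in that interpolation is exactly what produces the threshold $\sigma\ge 3/23$. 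Without repairing the dissipation mechanism and specifying this decay bookkeeping, the proposal does not constitute a proof.
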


Our theorem demonstrates that the 2D incompressible non-resistive MHD system
exhibits stability near a non-constant equilibrium: $B^{(0)}=(x_2, -x_1)^\top$.
In the proof of our theorem, it seems difficult to control $\|\nabla u\|_{L_t^1L_x^\infty}$
due to the absence of magnetic diffusion. In order to overcome this difficulty,
we shall make the most of the dissipation nature of linearized equation (\ref{3.1*}). To this end, we first transform the system (\ref{1.3}) into a damped wave-type system. By differentiating with respect to $t$ and making several substitutions, we deduce
\be\label{3.1}
\begin{cases}
{ \begin{array}{ll} u_{tt}-\Delta u_t-\partial_{\theta\theta} u+\nabla q_1=F_t-\partial_{\theta} G, \\
b_{tt}-\Delta b_t-\partial_{\theta\theta} b+\nabla q_2=G_t-\Delta G-\partial_{\theta} F,\\
\nabla\cdot u=\nabla\cdot b=0,
 \end{array} }
\end{cases}
\ee
where
\begin{align*}
\begin{aligned}
q_1:=&\pi_t-\psi_\theta,\\
q_2:=&\psi_t-\partial_\theta\pi-\Delta\psi,\\
F:=&b\cdot\nabla b-u\cdot\nabla u,\\
G:=&b\cdot\nabla u-u\cdot\nabla b.
\end{aligned}
\end{align*}
From the linearized equations:
\be\label{3.1*}
\begin{cases}
{ \begin{array}{ll} u_{tt}-\Delta u_t-\partial_{\theta\theta} u+\nabla q_1=0, \\
b_{tt}-\Delta b_t-\partial_{\theta\theta} b+\nabla q_2=0,\\
\nabla\cdot u=\nabla\cdot b=0,
 \end{array} }
\end{cases}
\ee
it is clear that $u$ and $b$ share the same wave structure.
Moreover, the wave structure exhibits many more regularity properties for $b$ in (\ref{3.1}) compared to the second equation of (\ref{1.3}).
Consequently, we will directly apply the energy estimates for the system (\ref{3.1}).
Additionally, the uniqueness of the solution implies that property (\ref{1.1}) will persist in the time evolution (refer to \cite{PZZ2018, ZZ2018} for detailed information).
As a result, we can conclude that the zeroth Fourier modes for both $u$ and $b$ are zero, i.e.,
\begin{align}\label{1.2}
\int_0^{2\pi}u(t,r,\theta)\md\theta=0, \quad\int_{0}^{2\pi}b(t,r,\theta)\md\theta=0.
\end{align}
This inspires us to utilize the Poincar\'{e} inequality for nonlinear terms, allowing us to acquire the crucial decay estimate: $\int_0^t(1+\tau)^2\|u\|_{\dot{H}^3}^2\md\tau$.
However, this alone is insufficient to control the $L^1$ time integrability of $\|\nabla u\|_{L_x^\infty}\lesssim\|\nabla u\|_{L^2_x}^{\frac{1}{2}}\|\nabla^3 u\|_{L^2_x}^{\frac{1}{2}}$,
since $\|\nabla^3 u\|_{L^2_x}^{\frac{1}{2}}$ fails to provide sufficient decay.
Accordingly, to compensate for this time integrability gap, an
extra energy estimate of order $-\sigma$ ($\frac{3}{23}\leq\sigma<1$) is required.
This provides us with the estimate of $\|u\|_{L_t^2H_x^{1-\sigma}}$.
Armed with these estimates, we can effectively control $\|\nabla u\|_{L_t^1L_x^\infty}$
(see section 3 for comprehensive details, specifically equation (\ref{key})).

The structure of this paper is organized as follows: Second 2 introduces two significant lemmas and energy functionals, which will be extensively used throughout the article.
Section 3 focuses on the estimate of energy functionals stated in Section 2. Finally, Section 4 presents the proof of Theorem \ref{thm0.1}.

We conclude this section by making some notations. Given $a,b\geq 0$, we denote $a\lesssim b$ as $a\leq Cb$, where $C$ denotes a generic constant independent of any particular solutions
or functions. The meaning of $C$ may vary from line to line.
Additionally, we will frequently use $\|f,g\|_{H^k}$ to represent $\|f\|_{k}+\|g\|_{H^k}$, where $\|\cdot\|_{H^k}$ $(k\geq 0)$ is the standard Sobolev norm.
Similarly, we use $\|f,g\|_{H^{-\sigma}}$ to denote $\|f\|_{H^{-\sigma}}+\|g\|_{H^{-\sigma}}$, where $\|\cdot\|_{H^{-\sigma}}$ $(\sigma>0)$ represents the Sobolev norm with a negative index.
The operator $\Lambda$, which signifies the fractional partial derivative, can be defined as
$$\widehat{\Lambda f}(\xi)=|\xi|\hat{f}(\xi),$$
where $\hat{f}(\xi)$ means the standard Fourier transform
$$\hat{f}(\xi)=\frac{1}{2\pi}\int_{\mathbb{R}^2}e^{-ix\cdot\xi}f(x)\md x.$$
Finally, $\langle\cdot,\cdot\rangle$ denotes the $L^2$ inner product.

\section{Preliminaries}

In this part, we make some preparations for the proof of Theorem \ref{thm0.1}.
Firstly, we introduce the following Poincar\'{e} type inequality, which plays a crucial role in the proof of energy estimates.
\begin{lem}\label{lem2.1}
For any given vector $u=(u_1,u_2)^\top\in H^k(\mathbb{R}^2)$ $(k\in \mathbb{N})$ satisfying the following conditions:
\begin{align}\label{001}
\begin{aligned}
&u_1(x)\mbox{ is an odd function of}\ x_1 \mbox{and an even function of}\ x_2,\\
&u_2(x)\mbox{ is an even function of}\ x_1 \mbox{and an odd function of}\ x_2,
\end{aligned}
\end{align}
there holds:
\begin{align}\label{002}
\|u\|_{H^k(\mathbb{R}^2)}\lesssim\|\partial_\theta u\|_{H^k(\mathbb{R}^2)}.
\end{align}
\end{lem}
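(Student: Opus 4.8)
The plan is to reduce the estimate to the vanishing of the zeroth angular Fourier mode of each component of $u$, followed by the one–dimensional Poincar\'e (Wirtinger) inequality on the circle. The conceptual core is the remark already made in the introduction: the parity conditions \eqref{001} force the angular average of $u$ to vanish. Passing to polar coordinates, I would first record that for a.e.\ $r$ one has $\int_0^{2\pi}u_l(r,\theta)\,\md\theta=0$ for $l=1,2$. For $u_1$, which is odd in $x_1$ and even in $x_2$, this follows from the substitution $\theta\mapsto\pi-\theta$ (i.e.\ $x_1\mapsto-x_1$): it is a measure–preserving bijection of the circle, so it leaves the integral unchanged, while oddness in $x_1$ sends the integrand to its negative, forcing the integral to equal minus itself. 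The component $u_2$ is treated identically via $x_2\mapsto-x_2$.

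For the base case $k=0$, I would fix $r$, expand $u_l(r,\cdot)$ in an angular Fourier series, and note that the previous step kills the constant mode, so the lowest active frequency is $|n|=1$. The sharp Poincar\'e inequality on the circle then gives $\int_0^{2\pi}|u_l(r,\theta)|^2\,\md\theta\le\int_0^{2\pi}|\partial_\theta u_l(r,\theta)|^2\,\md\theta$; integrating against $r\,\md r$ and summing in $l$ yields $\|u\|_{L^2}\le\|\partial_\theta u\|_{L^2}$. If $\partial_\theta u\notin L^2$ the claim is vacuous, so one may assume the right–hand side finite, which also justifies the Fubini slicing in $r$.

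To pass to general $k$ I would avoid differentiating \eqref{001} componentwise — note that $\partial_1u_1$ is even in both variables and hence need not have zero angular average, so a naive induction breaks down. Instead I would transfer the $H^k$ weight onto a rotation–invariant Fourier multiplier. Since $\partial_\theta=x_1\partial_2-x_2\partial_1$ generates rotations, a direct commutator computation gives $[\partial_\theta,\Delta]=0$, so $\partial_\theta$ commutes with $(1-\Delta)^{k/2}$; moreover $(1-\Delta)^{k/2}$ commutes with the reflections $x_i\mapsto-x_i$, whence $v:=(1-\Delta)^{k/2}u$ again satisfies \eqref{001}. Applying the base case to $v$ and commuting the multiplier back through $\partial_\theta$,
\begin{align*}
\|u\|_{H^k}\lesssim\|v\|_{L^2}\le\|\partial_\theta v\|_{L^2}=\|(1-\Delta)^{k/2}\partial_\theta u\|_{L^2}\lesssim\|\partial_\theta u\|_{H^k},
\end{align*}
which is exactly \eqref{002}. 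Equivalently, on the Fourier side $\widehat{\partial_\theta u}=\partial_{\theta_\xi}\hat u$ is the angular derivative in frequency, the weight $(1+|\xi|^2)^{k/2}$ is radial and therefore commutes with it, and $\hat u_l$ inherits the parities of $u_l$; applying the circle inequality slice–by–slice in $|\xi|$ then gives \eqref{002} directly with constant $1$.

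The main obstacle is conceptual rather than computational: identifying that \eqref{001} is precisely the statement that each component is angular–mean–free, and realizing that this property is destroyed by individual Cartesian derivatives. The resolution is to keep the vector field $u$ intact and move the derivative weight onto the rotation– and reflection–invariant operator $(1-\Delta)^{k/2}$, after which the whole estimate collapses to the one–dimensional Poincar\'e inequality.
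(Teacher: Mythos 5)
Your proposal is correct, and its core mechanism is the same as the paper's: the parity conditions \eqref{001} force each component to have zero angular mean on every circle, the one-dimensional Wirtinger inequality then gives the $L^2$ bound, and the higher-order case is reduced to this by commuting $\partial_\theta$ with a rotation-invariant operator. Where you differ is in how the derivative weight is handled. The paper splits into $k=2m$ and $k=2m+1$: for even $k$ it commutes $\partial_\theta$ with $\Delta^m$, and for odd $k$ it writes the extra gradient in polar coordinates as $(\partial_r,\frac1r\partial_\theta)$ so that each piece still has zero angular mean (using $\int_0^{2\pi}\partial_r u\,\md\theta=0$), then commutes with $\Delta^m$ again. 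You instead apply the single radial Bessel multiplier $(1-\Delta)^{k/2}$, observing that it commutes both with $\partial_\theta$ and with the reflections $x_i\mapsto -x_i$, so that $v=(1-\Delta)^{k/2}u$ inherits \eqref{001} and the whole estimate follows from the $k=0$ case in one stroke. Your route is more uniform (it needs no parity-of-$k$ case distinction and would work verbatim for fractional $k$), and your explicit remark that individual Cartesian derivatives such as $\partial_1 u_1$ destroy the mean-zero property — so that a naive componentwise induction fails — makes visible exactly the pitfall the paper's polar-coordinate detour is designed to avoid. The paper's version, in exchange, stays entirely within integer-order differential operators and makes the $\dot H^1$ step completely elementary.
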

\begin{proof}
We prove this Lemma in two cases: $k=2m$ and $k=2m+1$, $m\in \mathbb{N}$.

when $k=2m$, according to assumption (\ref{001}) and the standard Poincar\'{e} inequality,
it is easy to see
\begin{align*}
\|u\|_{L^2(\mathbb{R}^2)}\lesssim\|\partial_\theta u\|_{L^2(\mathbb{R}^2)}.
\end{align*}
Note that
\begin{align*}
\|u\|_{\dot{H}^k(\mathbb{R}^2)}=\|\Delta^m u\|_{L^2(\mathbb{R}^2)}
\lesssim\|\partial_\theta\Delta^m u\|_{L^2(\mathbb{R}^2)},
\end{align*}
and the commutant relation $[\Delta, \partial_\theta]=0$, one has
\begin{align*}
\|\partial_\theta\Delta^m u\|_{L^2(\mathbb{R}^2)}
=\|\Delta^m\partial_\theta u\|_{L^2(\mathbb{R}^2)}
=\|\partial_\theta u\|_{\dot{H}^k(\mathbb{R}^2)}.
\end{align*}
Therefore we can deduce that
\begin{align*}
\|u\|_{\dot{H}^k(\mathbb{R}^2)}
\lesssim\|\partial_\theta u\|_{\dot{H}^k(\mathbb{R}^2)}.
\end{align*}

When $k=2m+1$, we first consider the special case $m=0$. Using the fact
$\int_0^{2\pi}\partial_r u\md\theta=0$ (derived directly from $\int_0^{2\pi}u\md\theta=0$) and
the Poincar\'{e} inequality with respect to $\theta$, we get
\begin{align*}
\begin{aligned}
\|u\|_{\dot{H}^1(\mathbb{R}^2)}^2
=&\|\partial_r u\|_{L^2(\mathbb{R}^2)}^2
+\|\frac{1}{r}\partial_\theta u\|_{L^2(\mathbb{R}^2)}^2\\
\lesssim&\|\partial_r\partial_\theta u\|_{L^2(\mathbb{R}^2)}^2
+\|\frac{1}{r}\partial_{\theta\theta}u\|_{L^2(\mathbb{R}^2)}^2\\
=&\|\partial_\theta u\|_{\dot{H}^1(\mathbb{R}^2)}^2,
\end{aligned}
\end{align*}
where $(\partial_r, \frac{1}{r}\partial_\theta)^\top$ represents the gradient in polar coordinates.
By repeating the commutant relation $[\Delta, \partial_\theta]=0$, it is clear that
\begin{align*}
\|u\|_{\dot{H}^k(\mathbb{R}^2)}
=\|\Delta^m\nabla u\|_{L^2(\mathbb{R}^2)}
\lesssim\|\Delta^m\nabla\partial_\theta u\|_{\dot{H}^k(\mathbb{R}^2)}
=\|\partial_\theta u\|_{\dot{H}^k(\mathbb{R}^2)}.
\end{align*}

Based on the above analysis, we can conclude (\ref{002}).
\end{proof}

Actually, the similar estimate is valid for $b$:
\begin{align}\label{007}
\|b\|_{H^k(\mathbb{R}^2)}\lesssim\|\partial_\theta b\|_{H^k(\mathbb{R}^2)}.
\end{align}

Next, let us recall the classical product and commutator estimates.
\begin{lem}\label{lem2.2}\cite{KP1988, KPV1991}
For $k\geq 0$, the following inequalities hold:
\begin{align*}
\|\nabla^k(fg)\|_{L^2}\lesssim&\big(\|f\|_{L^\infty}\|g\|_{H^k}
+\|f\|_{H^k}\|g\|_{L^\infty}\big),\\
\|[\nabla^k,f]g\|_{L^2}\lesssim&\big(\|\nabla f\|_{L^\infty}\|g\|_{H^{k-1}}+\|f\|_{H^{k-1}}\|g\|_{L^\infty}\big),
\end{align*}
where $[\cdot,\cdot]$ stands for the Poisson bracket: $[a,b]=ab-ba$.
\end{lem}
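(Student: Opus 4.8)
The plan is to reduce both inequalities to the combination of three classical tools: the Leibniz rule for the integer-order operator $\nabla^k$, H\"older's inequality, and the Gagliardo--Nirenberg interpolation inequalities, with the final assembly being a one-line application of Young's inequality. Since the paper only uses integer orders $\nabla^k$, this elementary route is self-contained and avoids harmonic-analysis machinery; the sharp fractional endpoint versions credited to \cite{KP1988, KPV1991} instead rest on a Littlewood--Paley/paraproduct decomposition together with the Coifman--Meyer multiplier theorem, which one would have to invoke if $\nabla^k$ were replaced by $\Lambda^s$ with non-integer $s$.

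For the product estimate, I first expand by Leibniz,
\begin{align*}
\nabla^k(fg)=\sum_{j=0}^{k}c_j\,\nabla^j f\,\nabla^{k-j}g,
\end{align*}
(schematically, summing over the multi-indices of each order), so that $\|\nabla^k(fg)\|_{L^2}\lesssim\sum_{j=0}^k\|\nabla^j f\,\nabla^{k-j}g\|_{L^2}$. The endpoint terms $j=0$ and $j=k$ are bounded directly by $\|f\|_{L^\infty}\|\nabla^k g\|_{L^2}$ and $\|\nabla^k f\|_{L^2}\|g\|_{L^\infty}$. For $0<j<k$ I apply H\"older with the conjugate pair $p_j=\frac{2k}{j}$, $q_j=\frac{2k}{k-j}$ (so that $\frac1{p_j}+\frac1{q_j}=\frac12$), followed by Gagliardo--Nirenberg,
\begin{align*}
\|\nabla^j f\|_{L^{p_j}}\lesssim\|f\|_{L^\infty}^{1-\frac{j}{k}}\|\nabla^k f\|_{L^2}^{\frac{j}{k}},\qquad \|\nabla^{k-j}g\|_{L^{q_j}}\lesssim\|g\|_{L^\infty}^{\frac{j}{k}}\|\nabla^k g\|_{L^2}^{1-\frac{j}{k}}.
\end{align*}
Writing $t=\frac{j}{k}\in(0,1)$, the product of these two bounds equals $\big(\|f\|_{L^\infty}\|\nabla^k g\|_{L^2}\big)^{1-t}\big(\|\nabla^k f\|_{L^2}\|g\|_{L^\infty}\big)^{t}$, which by Young's inequality is $\lesssim\|f\|_{L^\infty}\|\nabla^k g\|_{L^2}+\|\nabla^k f\|_{L^2}\|g\|_{L^\infty}$. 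Summing over $j$ and bounding $\|\nabla^k g\|_{L^2}\le\|g\|_{H^k}$, $\|\nabla^k f\|_{L^2}\le\|f\|_{H^k}$ yields the first inequality.

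For the commutator I note that $[\nabla^k,f]g=\nabla^k(fg)-f\nabla^k g$ removes precisely the $j=0$ term, leaving $\|[\nabla^k,f]g\|_{L^2}\lesssim\sum_{j=1}^k\|\nabla^j f\,\nabla^{k-j}g\|_{L^2}$. Since now $j\ge1$, I peel off one derivative from $f$ and interpolate against $\|\nabla f\|_{L^\infty}$: applying Gagliardo--Nirenberg to $\nabla^{j-1}(\nabla f)$ and to $\nabla^{k-j}g$ with the conjugate exponents $\frac{2(k-1)}{j-1}$ and $\frac{2(k-1)}{k-j}$ gives, with $t=\frac{j-1}{k-1}$,
\begin{align*}
\|\nabla^j f\,\nabla^{k-j}g\|_{L^2}\lesssim\big(\|\nabla f\|_{L^\infty}\|\nabla^{k-1}g\|_{L^2}\big)^{1-t}\big(\|\nabla^k f\|_{L^2}\|g\|_{L^\infty}\big)^{t}.
\end{align*}
A second application of Young's inequality bounds this by $\|\nabla f\|_{L^\infty}\|\nabla^{k-1}g\|_{L^2}+\|\nabla^k f\|_{L^2}\|g\|_{L^\infty}$, and summing over $j$ (with $\|\nabla^{k-1}g\|_{L^2}\le\|g\|_{H^{k-1}}$) produces the stated commutator estimate.

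For integer $k$ there is no genuine obstacle beyond the exponent bookkeeping; the only point requiring care is that the Gagliardo--Nirenberg inequalities used above are non-exceptional on $\mathbb{R}^2$, which holds since the exponents $p_j,q_j$ lie in $[2,\infty)$ and the interpolation parameter sits at the admissible endpoint $\theta=j/k$ (respectively $\theta=(j-1)/(k-1)$). The real difficulty, and the reason the sharp statements are attributed to \cite{KP1988, KPV1991}, lies in the fractional/endpoint regime: there the Leibniz expansion is unavailable, and one must split $fg$ into the high--low, low--high and resonant paraproducts and estimate the resonant contribution directly, which is precisely where the argument genuinely uses Fourier analysis rather than pointwise calculus.
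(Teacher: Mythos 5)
Your proof is correct, but note that the paper does not prove this lemma at all: it is quoted as a known result with a citation to Kato--Ponce and Kenig--Ponce--Vega, so there is no in-paper argument to compare against. Your elementary route (Leibniz expansion, H\"older with the exponents $p_j=\tfrac{2k}{j}$, $q_j=\tfrac{2k}{k-j}$, the Gagliardo--Nirenberg interpolations between $L^\infty$ and $\dot H^k$, and Young's inequality) is the standard textbook proof of the integer-order Moser-type estimates and is entirely adequate here, since the paper only ever applies the lemma with integer $k$; the exponent bookkeeping in both the product and commutator cases checks out, and your closing remark correctly identifies that the cited references are really needed only for the fractional operator $\Lambda^s$. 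One discrepancy deserves flagging: your commutator argument produces the bound $\|\nabla f\|_{L^\infty}\|g\|_{H^{k-1}}+\|\nabla^k f\|_{L^2}\|g\|_{L^\infty}$, i.e.\ the classical Kato--Ponce form with $\|\nabla f\|_{H^{k-1}}$ in the second term, whereas the lemma as printed has $\|f\|_{H^{k-1}}$ there. The printed version is in fact false for $k\ge 2$ (the term $(\nabla^k f)g$ inside the commutator cannot be controlled by $k-1$ derivatives of $f$ together with $\|\nabla f\|_{L^\infty}$; take $f=N^{-1}\chi(x)\sin(Nx_1)$), so this is a typo in the paper rather than a gap in your proof --- you have proved the correct statement, which is also the one the paper actually uses, since in every application $f$ and $g$ carry ample regularity. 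The only cosmetic caveat is the degenerate case $k=1$ of your commutator interpolation (where $k-1=0$ appears in denominators); there the sum reduces to the single term $(\nabla f)g$, which is bounded directly, as you implicitly do.
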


Now we construct the following energy functionals which will enable us to derive the self-contained inequalities. For $s\in\mathbb{N}$ and a small constant $0<\sigma<1$, we take $\sigma=\frac{3}{23}$ and set
\begin{align}
E_0(t)=&\sup_{0\leq\tau\leq t}\big(\|u(\tau)\|_{H^{2s+6}}^2+\|b(\tau)\|_{H^{2s+6}}^2\big)
+\int_0^t\|\nabla u(\tau)\|_{H^{2s+6}}^2\md\tau\label{2.1},\\
E_1(t)=&\sup_{0\leq\tau\leq t}\big(\|u(\tau)\|_{\dot{H}^{-\sigma}}^2+\|b(\tau)\|_{\dot{H}^{-\sigma}}^2\big)
+\int_{0}^t\|u(\tau)\|_{\dot{H}^{1-\sigma}}^2\md\tau\label{2.2},\\
\nonumber e_0(t)=&\sum_{m=0}^s\big\{\sup_{0\leq\tau\leq t}\big(\|u_\tau(\tau),b_\tau(\tau)\|_{\dot{H}^{2m}}^2
+\|\partial_\theta u(\tau),\partial_\theta b(\tau)\|_{\dot{H}^{2m}}^2
+\|u(\tau),b(\tau)\|_{\dot{H}^{2m+2}}^2\big)\\
&+\int_0^t\big(\|u_\tau(\tau),b_\tau(\tau)\|_{\dot{H}^{2m+1}}^2
+\|\partial_\theta u(\tau),\partial_\theta b(\tau)\|_{\dot{H}^{2m+1}}^2\big)\md\tau\big\}\label{2.3},\\
\nonumber e_1(t)=&\sum_{m=1}^{s-1}\big\{\sup_{0\leq\tau\leq t}(1+\tau)^2\big(\|u_\tau(\tau),b_\tau(\tau)\|_{\dot{H}^{2m}}^2
+\|\partial_\theta u(\tau),\partial_\theta b(\tau)\|_{\dot{H}^{2m}}^2
+\|u(\tau),b(\tau)\|_{\dot{H}^{2m+2}}^2\big)\\
&+\int_0^t(1+\tau)^2\big(\|u_\tau(\tau),b_\tau(\tau)\|_{\dot{H}^{2m+1}}^2
+\|\partial_\theta u(\tau),\partial_\theta b(\tau)\|_{\dot{H}^{2m+1}}^2\big)\md\tau\big\}\label{2.4}.
\end{align}

\section{Energy estimates}

The purpose of this section is to derive the uniform boundness of (\ref{2.1})-(\ref{2.4}).
We start by estimating $E_0(t)$.
\begin{lem}\label{lem3.1}
From the definition in (\ref{2.1}), for $s\geq 2$, we have
\begin{align}\label{1}
E_0(t)\lesssim\varepsilon^2+E_0(t)^{\frac{3}{2}}+E_0(t)e_0(t)^{\frac{1}{2}}
+E_0(t)E_1(t)^{\frac{1}{2(2+\sigma)}}e_1(t)^{\frac{1+\sigma}{2(2+\sigma)}}.
\end{align}
\end{lem}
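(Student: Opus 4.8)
The plan is to run a direct $H^{2s+6}$ energy estimate on the first-order system \eqref{1.3}, exploiting only the parabolic dissipation of the $u$-equation (the $b$-equation being non-resistive), which is why $E_0(t)$ carries the single dissipative integral $\int_0^t\|\nabla u\|_{H^{2s+6}}^2\,\md\tau$. First I would apply $\Lambda^k$ for $0\le k\le 2s+6$ to both evolution equations, pair in $L^2$ with $\Lambda^k u$ and $\Lambda^k b$ respectively, and add; summing over $k$ reconstructs $\tfrac12\tfrac{d}{d\tau}(\|u\|_{H^{2s+6}}^2+\|b\|_{H^{2s+6}}^2)$ together with the good term $\|\nabla u\|_{H^{2s+6}}^2$ produced by $-\langle\Lambda^k\Delta u,\Lambda^k u\rangle$. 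I would use $\Lambda^k$ rather than $\nabla^k$ on purpose: $\Lambda$ is a rotation-invariant Fourier multiplier, hence $[\Lambda^k,\partial_\theta]=0$, and this is exactly what renders the linear coupling harmless.

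Before estimating anything I would exhibit the structural cancellations. The gradient terms $\nabla q$, $\nabla\pi$, $\nabla\psi$ drop after pairing against the divergence-free fields $\Lambda^k u,\Lambda^k b$. The two linear terms cancel exactly: writing $-\langle\Lambda^k\partial_\theta b,\Lambda^k u\rangle-\langle\Lambda^k\partial_\theta u,\Lambda^k b\rangle=-\langle\partial_\theta\Lambda^k b,\Lambda^k u\rangle-\langle\partial_\theta\Lambda^k u,\Lambda^k b\rangle$ via $[\Lambda^k,\partial_\theta]=0$, and using that $\partial_\theta=x_2\partial_1-x_1\partial_2$ is the divergence-free (hence anti-self-adjoint) generator of rotations, the sum collapses to $\langle\partial_\theta\Lambda^k u,\Lambda^k b\rangle-\langle\partial_\theta\Lambda^k u,\Lambda^k b\rangle=0$. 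The top-order advection terms vanish because $\nabla\cdot u=0$ gives $\langle u\cdot\nabla\Lambda^k u,\Lambda^k u\rangle=\langle u\cdot\nabla\Lambda^k b,\Lambda^k b\rangle=0$, and the leading part of the magnetic coupling $\langle b\cdot\nabla\Lambda^k b,\Lambda^k u\rangle+\langle b\cdot\nabla\Lambda^k u,\Lambda^k b\rangle$ cancels after one integration by parts using $\nabla\cdot b=0$. What survives is a sum of commutator trilinears of the schematic form $\langle[\Lambda^k,f\cdot\nabla]g,\Lambda^k h\rangle$ with $f,g,h\in\{u,b\}$.

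These remainders I would bound with the commutator estimate of Lemma \ref{lem2.2}, which reduces each to quantities controlled by Lipschitz norms, typically $\|\nabla u\|_{L^\infty}$ or $\|\nabla b\|_{L^\infty}$ times two high Sobolev factors. After integrating in time I would distribute derivatives by a high–low splitting: the top-order factor always lands in $E_0$, the remaining intermediate derivatives are absorbed by $e_0$ (whose top level is $\dot H^{2s+2}$) after converting its angular quantities $\|\partial_\theta u,\partial_\theta b\|_{\dot H^{2m}}$ into Sobolev norms through the Poincar\'e inequality of Lemma \ref{lem2.1}, and the Lipschitz factor is handled separately. The genuinely velocity-driven terms that keep a spare derivative absorbable into the dissipation give, by Cauchy--Schwarz in $\tau$ and the $2$D embedding $\|\nabla u\|_{L^\infty}\lesssim\|\nabla u\|_{H^{2s+5}}$, the cubic term $E_0(t)^{3/2}$ (two dissipation factors from $\int_0^t\|\nabla u\|_{H^{2s+6}}^2\le E_0$ and one supremum). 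The terms that leave no spare dissipation — in particular those multiplying $\|b\|_{H^{2s+6}}^2$, controlled only in $L^\infty_\tau$ since the $b$-equation is non-resistive — yield the monomials $E_0(t)e_0(t)^{1/2}$ and $E_0(t)E_1(t)^{1/(2(2+\sigma))}e_1(t)^{(1+\sigma)/(2(2+\sigma))}$ according to how the Lipschitz factor is estimated.

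The main obstacle, and the step that forces the choice $\sigma=\tfrac{3}{23}$ (the endpoint of the admissible window $\tfrac{3}{23}\le\sigma<1$), is precisely the $L^1_\tau L^\infty_x$ control of the Lipschitz norms of $u$ and $b$ in the non-dissipative contributions. My plan is to interpolate $\|\nabla u\|_{L^\infty}\lesssim\|\nabla u\|_{L^2}^{1/2}\|\nabla^3 u\|_{L^2}^{1/2}$ (and similarly for $b$), then split the regularity across the negative-order energy $E_1$, which supplies $\|u\|_{\dot H^{-\sigma}}$ and $\int_0^t\|u\|_{\dot H^{1-\sigma}}^2$, and the time-weighted energy $e_1$, which through Lemma \ref{lem2.1} supplies $(1+\tau)$-decay of the intermediate norms such as $\|u\|_{\dot H^3}$. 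Integrating the resulting time factor over $[0,\infty)$ converges exactly on the stated $\sigma$-window, and the H\"older exponents in $\tau$ distribute the total power $\tfrac12$ as $E_1^{1/(2(2+\sigma))}e_1^{(1+\sigma)/(2(2+\sigma))}$; multiplying by the supremum factor $\|u,b\|_{H^{2s+6}}^2\le E_0$ gives the last monomial. Collecting the four contributions, integrating the energy identity from $0$ to $t$, and invoking the initial bound $\|u_0,b_0\|_{H^{2s+6}}\lesssim\varepsilon$ yields \eqref{1}. I expect the delicate point to be the bookkeeping of these interpolation exponents — matching the decay generated by $e_1$ against the loss from the negative index $\sigma$ so that the $\tau$-integral is finite and the powers sum correctly.
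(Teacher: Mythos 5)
Your proposal follows essentially the same route as the paper: a top-order energy estimate on \eqref{1.3} in which the rotation terms cancel by anti-self-adjointness of $\partial_\theta$, the surviving commutator trilinears are bounded via Lemma \ref{lem2.2}, the Poincar\'e inequality \eqref{007} routes the $b$-Lipschitz factors into $e_0(t)$, and $\|\nabla u\|_{L^\infty}$ is interpolated between $\dot H^{1-\sigma}$ and $\dot H^{3}$ with a $(1+\tau)$-weight split to produce the monomial $E_0(t)E_1(t)^{\frac{1}{2(2+\sigma)}}e_1(t)^{\frac{1+\sigma}{2(2+\sigma)}}$. The one minor misattribution is that the time integral in that last step converges for every $0<\sigma<1$, so the lower bound $\sigma\ge\frac{3}{23}$ is not forced by this lemma but by the wild term in Lemma \ref{lem3.4}; this does not affect the argument here.
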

\begin{proof}
First of all, one can easily derive the following basic energy law
\begin{eqnarray*}
\|u\|_{L^2}^2+\|b\|_{L^2}^2+2\int_0^t\|\nabla u\|_{L^2}^2\md\tau=\|u_0\|_{L^2}^2+\|b_0\|_{L^2}^2.
\end{eqnarray*}
Next, we deal with the highest order energy. Operating the $\Delta^{2s+6}$ derivative on the equations of (\ref{1.3}) and taking the inner product with $u$ and $b$, respectively, we obtain
\begin{align}\label{*}
\begin{aligned}
&\frac{1}{2}\frac{\md}{\md t}\big(\|u\|_{\dot{H}^{2s+6}}^2+\|b\|_{\dot{H}^{2s+6}}^2\big)
+\|\nabla u\|_{\dot{H}^{2s+6}}^2\\
=&-\langle\nabla^{2s+6}u,[\nabla^{2s+6}, u\cdot\nabla]u\rangle
+\langle\nabla^{2s+6}u,[\nabla^{2s+6}, b\cdot\nabla]b\rangle\\
&-\langle\nabla^{2s+6}b,[\nabla^{2s+6}, u\cdot\nabla]b\rangle
+\langle\nabla^{2s+6}b,[\nabla^{2s+6}, b\cdot\nabla]u\rangle\\
:=&\sum_{i=1}^4I_i,
\end{aligned}
\end{align}
where we have used the divergence-free conditions and the fact that
\begin{align*}
\langle\Delta^{2s+6}\partial_\theta b,u\rangle
+\langle\Delta^{2s+6}\partial_\theta u,b\rangle=0.
\end{align*}
By H\"{o}lder's inequality, the commutator estimate and Sobolev imbedding theorem, we deduce
\begin{align*}
\int_0^t|I_1|\md\tau
\lesssim&\int_0^t\|u\|_{\dot{H}^{2s+6}}^2\|\nabla u\|_{L^{\infty}}\md\tau\\
\lesssim&\int_0^t\|u\|_{\dot{H}^{2s+6}}^2\|\nabla u\|_{H^2}\md\tau\\
\lesssim&E_0(t)^{\frac{3}{2}}.
\end{align*}
Similarly, one has
\begin{align*}
|I_2|
\lesssim& \|u\|_{\dot{H}^{2s+6}}\|b\|_{\dot{H}^{2s+6}}\|\nabla b\|_{H^2}.
\end{align*}
Poincar\'{e} inequality (\ref{007}) further leads to
\begin{align*}
\int_0^t|I_2|\md\tau
\lesssim& \int_0^t\|u\|_{\dot{H}^{2s+6}}\|b\|_{\dot{H}^{2s+6}}\|\nabla \partial_\theta b\|_{H^2}\md\tau\\
\lesssim&\sup_{0\leq\tau\leq t}\|b\|_{\dot{H}^{2s+6}}
\big(\int_0^t\|u\|_{\dot{H}^{2s+6}}^2\md\tau\big)^{\frac{1}{2}}
\big(\int_0^t\|\nabla \partial_\theta b\|_{H^2}^2\md\tau\big)^{\frac{1}{2}}\\
\lesssim&E_0(t)e_0(t)^{\frac{1}{2}}.
\end{align*}
For $I_3$ and $I_4$, again using H\"{o}lder's inequality, the commutator estimate and  Sobolev imbedding theorem, one has
\begin{align*}
|I_3|+|I_4|
\lesssim& \|u\|_{\dot{H}^{2s+6}}\|b\|_{\dot{H}^{2s+6}}\|\nabla b\|_{H^2}
+\|b\|_{\dot{H}^{2s+6}}^2\|\nabla u\|_{L^\infty}.
\end{align*}
The first term on the right-hand side of the above inequality can be treated in the same way as $I_2$.
However, due to the non-resistive property of our system, the second term turns out to be more complicated.
Although the symmetry condition (\ref{1.1}) provide us some decay estimates, more is needed to control this term. To overcome this difficulty, we introduce the extra energy functional of $-\sigma$ order, denoted as $E_1(t)$, which implies that $u\in L_t^2H_x^{1-\sigma}$ ($\frac{3}{23}\leq\sigma<1$).
Then, by utilizing Gagliardo-Nirenberg's inequality and Lemma \ref{lem2.1}, we can estimate the second term as follows
\begin{align}\label{key}
\begin{aligned}
&\int_0^t\|b\|_{\dot{H}^{2s+6}}^2\|\nabla u\|_{L^{\infty}}\md\tau\\
\lesssim&
\int_0^t\|b\|_{\dot{H}^{2s+6}}^2\|u\|_{\dot{H}^{1-\sigma}}^{\frac{1}{2+\sigma}}
\|\partial_\theta u\|_{\dot{H}^3}^{\frac{\sigma}{2(2+\sigma)}+\frac{1}{2}}\md\tau\\
=&C\int_0^t(1+\tau)^{-\frac{1}{2}-\frac{\sigma}{2(2+\sigma)}}
\|b\|_{\dot{H}^{2s+6}}^2\|u\|_{\dot{H}^{1-\sigma}}^{\frac{1}{2+\sigma}}
(1+\tau)^{\frac{\sigma}{2(2+\sigma)}+\frac{1}{2}}\md\tau
\|\partial_\theta u\|_{\dot{H}^3}^{\frac{\sigma}{2(2+\sigma)}+\frac{1}{2}}\md\tau\\
\lesssim&\sup_{0\leq\tau\leq t}\|b\|_{H^{2s+6}}^2
\big(\int_0^t\|u\|_{\dot{H}^{1-\sigma}}^2\md\tau\big)^{\frac{1}{2(2+\sigma)}}
\big(\int_0^t(1+\tau)^2
\|\partial_\theta u\|_{\dot{H}^3}^2\md\tau\big)^{\frac{1+\sigma}{2(2+\sigma)}}\\
\lesssim&E_0(t)E_1(t)^{\frac{1}{2(2+\sigma)}}e_1(t)^{\frac{1+\sigma}{2(2+\sigma)}}.
\end{aligned}
\end{align}

Integrating (\ref{*}) with respect to time and inserting the above estimates leads to (\ref{1}).
\end{proof}

In the following lemma, we aim to obtain the estimate of $\|u\|_{L_t^2H_x^{1-\sigma}}$.
\begin{lem}\label{lem3.2}
From the definition in (\ref{2.2}), for $0<\sigma< 1$, we have
\begin{align}\label{2}
E_1(t)\lesssim\varepsilon^2+E_1(t)e_0(t)^{\frac{1}{2}}.
\end{align}
\end{lem}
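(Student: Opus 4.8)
The plan is to carry out a dissipative energy estimate directly on the first-order perturbation system \eqref{1.3}, but in the homogeneous negative space $\dot H^{-\sigma}$. First I would apply $\Lambda^{-\sigma}$ to the two equations, pair the velocity equation with $u$ and the magnetic equation with $b$ in $L^2$, and add the results. Since $\Lambda^{-\sigma}$ is a radial Fourier multiplier, it commutes with both $\nabla$ and the angular derivative $\partial_\theta=x_1\partial_2-x_2\partial_1$; hence the pressure/stream terms $\nabla\pi,\nabla\psi$ drop out after integration by parts (using $\nabla\cdot u=\nabla\cdot b=0$), and the two linear coupling terms cancel because $\partial_\theta$ is skew-adjoint on $L^2$, giving $\langle\partial_\theta b,u\rangle_{\dot H^{-\sigma}}+\langle\partial_\theta u,b\rangle_{\dot H^{-\sigma}}=0$ --- the exact negative-order analogue of the identity already used in Lemma \ref{lem3.1}. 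The viscous term yields the only surviving good term, $\|u\|_{\dot H^{1-\sigma}}^2$ (there is no magnetic dissipation, which is precisely why \eqref{2.2} records only $\int_0^t\|u\|_{\dot H^{1-\sigma}}^2\,\md\tau$). Altogether this produces the identity
\begin{align*}
\frac12\frac{\md}{\md t}\big(\|u\|_{\dot H^{-\sigma}}^2+\|b\|_{\dot H^{-\sigma}}^2\big)+\|u\|_{\dot H^{1-\sigma}}^2
={}&-\langle u\cdot\nabla u,u\rangle_{\dot H^{-\sigma}}+\langle b\cdot\nabla b,u\rangle_{\dot H^{-\sigma}}\\
&-\langle u\cdot\nabla b,b\rangle_{\dot H^{-\sigma}}+\langle b\cdot\nabla u,b\rangle_{\dot H^{-\sigma}}.
\end{align*}
Integrating in time, the left side reproduces $E_1(t)$ up to the initial contribution $\|u_0,b_0\|_{\dot H^{-\sigma}}^2\le\varepsilon^2$ furnished by \eqref{**}, so the problem reduces to bounding the time integral of the four trilinear terms by $E_1(t)e_0(t)^{\frac12}$.

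The guiding principle is to arrange every term so that it factors as $\|\cdot\|_{\dot H^{-\sigma}}\,\|u\|_{\dot H^{1-\sigma}}\,\|\cdot\|_{\dot H^1}$, i.e. with one copy of the $u$-dissipation. For the terms paired with $u$ I would use incompressibility to write the nonlinearity in divergence form and integrate by parts in the $\dot H^{-\sigma}$ pairing, throwing the derivative onto $u$; e.g. $\langle b\cdot\nabla b,u\rangle_{\dot H^{-\sigma}}=-\langle b\otimes b,\nabla u\rangle_{\dot H^{-\sigma}}\le\|b\otimes b\|_{\dot H^{-\sigma}}\|u\|_{\dot H^{1-\sigma}}$, after which a negative-order product estimate $\|b\otimes b\|_{\dot H^{-\sigma}}\lesssim\|b\|_{\dot H^{-\sigma}}\|b\|_{\dot H^1}$ puts the companion factor at order $\dot H^1$. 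For the terms paired with $b$ (where there is no dissipation to extract) I would instead keep the pairing, peel off $\|b\|_{\dot H^{-\sigma}}$ by Cauchy--Schwarz, and bound the remaining quadratic expression by duality and the fractional Hardy--Littlewood--Sobolev inequality together with the 2D embedding $\dot H^{1-\sigma}\hookrightarrow L^{2/\sigma}$; e.g. $\|u\cdot\nabla b\|_{\dot H^{-\sigma}}\lesssim\|u\|_{\dot H^{1-\sigma}}\|\nabla b\|_{L^2}=\|u\|_{\dot H^{1-\sigma}}\|b\|_{\dot H^1}$. A scaling count confirms that in each case the companion factor is forced to sit at the integer order $\dot H^1$. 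Cauchy--Schwarz in time then distributes the factors as $\sup_\tau\|\cdot\|_{\dot H^{-\sigma}}\le E_1^{\frac12}$, $\big(\int_0^t\|u\|_{\dot H^{1-\sigma}}^2\big)^{\frac12}\le E_1^{\frac12}$, and $\big(\int_0^t\|\cdot\|_{\dot H^1}^2\big)^{\frac12}\le e_0(t)^{\frac12}$.

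The last of these three controls is where the real work lies and it is the main obstacle: the $\dot H^1$ norms of $u$ and $b$ are not themselves recorded in $e_0(t)$, whose time-integrated entries in \eqref{2.3} are only the \emph{angular} derivatives $\|\partial_\theta u,\partial_\theta b\|_{\dot H^{2m+1}}$, and a naive interpolation of $\|\cdot\|_{\dot H^1}$ would drag in a $\|\cdot\|_{L^2}$-in-time factor that $e_0$ does not bound. The resolution is to invoke the Poincar\'e inequalities \eqref{002} and \eqref{007} --- valid thanks to the symmetry-induced vanishing \eqref{1.2} of the zeroth angular mode --- to replace $\|u\|_{\dot H^1}$ and $\|b\|_{\dot H^1}$ by $\|\partial_\theta u\|_{\dot H^1}$ and $\|\partial_\theta b\|_{\dot H^1}$, so that $\int_0^t\|\cdot\|_{\dot H^1}^2\,\md\tau\lesssim e_0(t)$ indeed holds with no stray low-order factor. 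This is also the step that fixes the admissible range of $\sigma$: the dual Hardy--Littlewood--Sobolev estimate in $\mathbb R^2$ requires $\sigma<n/2=1$, and the borderline 2D product at order $\dot H^1$ is absorbed using the surplus regularity available in $e_0$. Collecting the four contributions bounds the time integral of the right-hand side by $E_1(t)e_0(t)^{\frac12}$, which yields \eqref{2}.
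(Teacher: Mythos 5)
Your proposal follows essentially the same route as the paper: apply $\Lambda^{-\sigma}$, use the skew-adjointness of $\partial_\theta$ and its commutation with the radial multiplier to cancel the linear coupling, keep only the velocity dissipation $\|u\|_{\dot H^{1-\sigma}}^2$, bound the nonlinearities via the dual embedding $L^q\hookrightarrow\dot H^{-\sigma}$ with Gagliardo--Nirenberg so that each term factors as $\|\cdot\|_{\dot H^{-\sigma}}\|u\|_{\dot H^{1-\sigma}}\|\cdot\|_{\dot H^1}$, and then invoke the Poincar\'e inequalities \eqref{002} and \eqref{007} to trade $\dot H^1$ for $\|\partial_\theta\cdot\|_{\dot H^1}$ so that $e_0$ controls the time integral. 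The only (harmless) deviation is in the terms paired with $b$: estimating $\|b\cdot\nabla u\|_{\dot H^{-\sigma}}$ directly puts the factor $\|u\|_{\dot H^1}$ rather than $\|u\|_{\dot H^{1-\sigma}}$ in the product, which after interpolation still yields an acceptable cubic term, whereas the paper first rewrites $G$ in divergence form and integrates by parts to land exactly on $E_1(t)e_0(t)^{1/2}$.
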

\begin{proof}
Applying $\Lambda^{-\sigma}$ to the MHD equations (\ref{1.3}), and taking $L^2$ inner product with ($\Lambda^{-\sigma}u, \Lambda^{-\sigma}b$), we have
\begin{align*}
\begin{aligned}
&\frac{1}{2}\frac{\md}{\md t}\big(\|u\|_{\dot{H}^{-\sigma}}^2+\|b\|_{\dot{H}^{-\sigma}}^2\big)
+\|\nabla u\|_{\dot{H}^{-\sigma}}^2\\
=&\langle\Lambda^{-\sigma}u,\Lambda^{-\sigma}(b\cdot\nabla b-u\cdot\nabla u)\rangle
+\langle\Lambda^{-\sigma}b,\Lambda^{-\sigma}(b\cdot\nabla u-u\cdot\nabla b)\rangle\\
:=&M_1+M_2,
\end{aligned}
\end{align*}
where we have used the fact that
$$\langle\Lambda^{-\sigma}u,\Lambda^{-\sigma}\partial_\theta b\rangle
+\langle\Lambda^{-\sigma}b,\Lambda^{-\sigma}\partial_\theta u\rangle=0.$$
Using divergence-free conditions, integrating by parts, H\"{o}lder's inequality and  Sobolev imbedding theorem,
for $\frac{1}{q}=\frac{1}{2}+\frac{\sigma}{2}=\frac{1}{2}+\frac{1}{p}$, we have
\begin{align*}
\begin{aligned}
M_1
=&\langle\Lambda^{1-\sigma}u,\Lambda^{-\sigma}(u\otimes u-b\otimes b)\rangle\\
\leq&\|u\|_{\dot{H}^{1-\sigma}}\|u\otimes u-b\otimes b\|_{\dot{H}^{-\sigma}}\\
\lesssim&\|u\|_{\dot{H}^{1-\sigma}}\|u\otimes u-b\otimes b\|_{L^q}\\
\lesssim&\|u\|_{\dot{H}^{1-\sigma}}\big(\|u\|_{L^p}\|u\|_{L^2}+\|b\|_{L^p}\|b\|_{L^2}\big).\\
\end{aligned}
\end{align*}
By Gagliardo-Nirenberg's inequalities:
\begin{align*}
\|u\|_{L^p}\lesssim&\|u\|_{\dot{H}^{-\sigma}}^{\frac{2}{p(1+\sigma)}}
\|u\|_{\dot{H}^1}^{1-\frac{2}{p(1+\sigma)}},\\
\|u\|_{L^2}\lesssim&\|u\|_{\dot{H}^{-\sigma}}^{\frac{1}{1+\sigma}}
\|u\|_{\dot{H}^1}^{\frac{\sigma}{1+\sigma}},
\end{align*}
Lemma \ref{lem2.1} and (\ref{007}), we further deduce
\begin{align*}
\int_0^tM_1\md\tau
\lesssim&\int_0^t\|u\|_{\dot{H}^{1-\sigma}}\big(\|u\|_{\dot{H}^{-\sigma}}\|u\|_{\dot{H}^1}
+\|b\|_{\dot{H}^{-\sigma}}\|b\|_{\dot{H}^1}\big)\md\tau\\
\lesssim&\int_0^t\|u\|_{\dot{H}^{1-\sigma}}\big(\|u\|_{\dot{H}^{-\sigma}}\|\partial_\theta u\|_{\dot{H}^1}
+\|b\|_{\dot{H}^{-\sigma}}\|\partial_\theta b\|_{\dot{H}^1}\big)\md\tau\\
\lesssim&\sup_{0\leq\tau\leq t}\|u\|_{\dot{H}^{-\sigma}}\big(\int_0^t\|u\|_{\dot{H}^{1-\sigma}}^2\md \tau\big)^{\frac{1}{2}}\big(\int_0^t\|\partial_\theta u\|_{\dot{H}^1}^2\md\tau\big)^{\frac{1}{2}}\\
&+\sup_{0\leq\tau\leq t}\|b\|_{\dot{H}^{-\sigma}}\big(\int_0^t\|u\|_{\dot{H}^{1-\sigma}}^2\md \tau\big)^{\frac{1}{2}}\big(\int_0^t\|\partial_\theta b\|_{\dot{H}^1}^2\md\tau\big)^{\frac{1}{2}}\\
\lesssim& E_1(t)e_0(t)^{\frac{1}{2}}.
\end{align*}
In the same manner, for $\frac{1}{q}=\frac{1}{2}+\frac{\sigma}{2}=\frac{1}{2}+\frac{1}{p}$, we have
\begin{align*}
\begin{aligned}
M_2
=&\langle\Lambda^{1-\sigma}b,\Lambda^{-\sigma}(u\otimes b-b\otimes u)\rangle\\
\leq&\|b\|_{\dot{H}^{1-\sigma}}\|u\otimes b-b\otimes u\|_{\dot{H}^{-\sigma}}\\
\lesssim&\|b\|_{\dot{H}^{1-\sigma}}\|u\otimes b-b\otimes u\|_{L^q}\\
\lesssim&\|b\|_{\dot{H}^{1-\sigma}}\|b\|_{L^2}\|u\|_{L^p}.\\
\end{aligned}
\end{align*}
By Sobolev imbedding inequality $\|u\|_{L^p}\lesssim\|u\|_{\dot{H}^{1-\sigma}}$
and the interpolation inequalities:
\begin{align*}
\begin{aligned}
\|b\|_{\dot{H}^{1-\sigma}}\lesssim\|b\|_{\dot{H}^{-\sigma}}^{\frac{\sigma}{\sigma+1}}
\|b\|_{\dot{H}^1}^{1-\frac{\sigma}{\sigma+1}},\\
\|b\|_{L^2}\lesssim\|b\|_{\dot{H}^{-\sigma}}^{\frac{1}{\sigma+1}}
\|b\|_{\dot{H}^1}^{1-\frac{1}{\sigma+1}},
\end{aligned}
\end{align*}
as well as (\ref{007}), we further deduce
\begin{align*}
M_2\lesssim
\|b\|_{\dot{H}^{-\sigma}}\|\partial_\theta b\|_{\dot{H}^1}\|u\|_{\dot{H}^{1-\sigma}}.
\end{align*}
Thus
\begin{align*}
\begin{aligned}
\int_0^tM_2\md\tau
\lesssim&\sup_{0\leq\tau\leq t}\|b\|_{\dot{H}^{-\sigma}}
\big(\int_0^t\|u\|_{\dot{H}^{1-\sigma}}^2\md\tau\big)^{\frac{1}{2}}
\big(\int_0^t\|\partial_\theta b\|_{\dot{H}^1}^2\md\tau\big)^{\frac{1}{2}}\\
\lesssim&E_1(t)e_0(t)^{\frac{1}{2}}.
\end{aligned}
\end{align*}

Collecting the above estimates, we get the estimate of $E_1(t)$.
\end{proof}

The proof of the decay estimate $e_1(t)$ relies crucially on the following lemma.
Taking advantage of the damped wave-type equations (\ref{3.1}), we shall establish the lower-order energy estimates.
\begin{lem}\label{lem3.3}
From the definition in (\ref{2.3}), for $s\geq 1$, we have
\begin{align}\label{3}
e_0(t)\lesssim\varepsilon^2+e_0(t)^{\frac{3}{2}}
+E_0(t)^{\frac{1}{2}}e_0(t)
+E_0(t)e_0(t)^{\frac{1}{2}}
+E_0(t)^{\frac{3}{4}}e_0(t)^{\frac{3}{4}}.
\end{align}
\end{lem}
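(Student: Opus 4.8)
The plan is to exploit the dissipative structure of the damped wave-type system (\ref{3.1}) through a two-multiplier energy method, performed level by level in $m$. Fix $m\in\{0,\dots,s\}$, apply $\Lambda^{2m}$ to both equations of (\ref{3.1}), and take the $L^2$ inner product with $(u_t,b_t)$. Since $\partial_\theta$ is skew-adjoint one has $-\langle\partial_{\theta\theta}u,u_t\rangle=\langle\partial_\theta u,\partial_\theta u_t\rangle=\tfrac12\tfrac{\md}{\md t}\|\partial_\theta u\|^2$, while the divergence-free conditions kill the pressure terms $\nabla q_1,\nabla q_2$; the linear parts are moreover decoupled (compare (\ref{3.1*})), so I obtain a clean identity
\begin{align*}
\tfrac12\tfrac{\md}{\md t}\big(\|u_t,b_t\|_{\dot H^{2m}}^2+\|\partial_\theta u,\partial_\theta b\|_{\dot H^{2m}}^2\big)+\|u_t,b_t\|_{\dot H^{2m+1}}^2=\mathcal{R}_A,
\end{align*}
where $\mathcal{R}_A$ collects the nonlinear forcings $F_t-\partial_\theta G$ and $G_t-\Delta G-\partial_\theta F$ paired with $(u_t,b_t)$. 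This yields the dissipation $\int\|u_t,b_t\|_{\dot H^{2m+1}}^2\md\tau$ but no $\partial_\theta$-dissipation.

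To recover the latter I run a second estimate: apply $\Lambda^{2m+1}$ and pair with $(u,b)$, so that $-\langle\partial_{\theta\theta}u,u\rangle$ produces the coercive $\|\partial_\theta u\|_{\dot H^{2m+1}}^2$ at the cost of a wrong-sign term,
\begin{align*}
\tfrac{\md}{\md t}\Big(\langle\Lambda^{2m+1}u_t,\Lambda^{2m+1}u\rangle+\langle\Lambda^{2m+1}b_t,\Lambda^{2m+1}b\rangle+\tfrac12\|u,b\|_{\dot H^{2m+2}}^2\Big)-\|u_t,b_t\|_{\dot H^{2m+1}}^2+\|\partial_\theta u,\partial_\theta b\|_{\dot H^{2m+1}}^2=\mathcal{R}_B.
\end{align*}
I then form (Estimate A)$+\kappa$(Estimate B) with $\kappa>0$ small: the wrong-sign $-\kappa\|u_t,b_t\|_{\dot H^{2m+1}}^2$ is dominated by the good $\|u_t,b_t\|_{\dot H^{2m+1}}^2$, and the cross term is controlled via $|\langle\Lambda^{2m+1}u_t,\Lambda^{2m+1}u\rangle|=|\langle\Lambda^{2m}u_t,\Lambda^{2m+2}u\rangle|\le\tfrac12(\|u_t\|_{\dot H^{2m}}^2+\|u\|_{\dot H^{2m+2}}^2)$, so that for small $\kappa$ the combined energy is comparable to $\|u_t,b_t\|_{\dot H^{2m}}^2+\|\partial_\theta u,\partial_\theta b\|_{\dot H^{2m}}^2+\|u,b\|_{\dot H^{2m+2}}^2$, i.e.\ the $m$-th summand of the sup-part of $e_0$. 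Integrating in time, summing over $0\le m\le s$, and bounding the data by $\varepsilon^2$ via (\ref{**}) reduces the lemma to estimating $\int_0^t(|\mathcal{R}_A|+\kappa|\mathcal{R}_B|)\md\tau$.

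The remaining work is the nonlinear bookkeeping. Each term of $F,G$ is quadratic, so $F_t,G_t$ are sums of products in which exactly one factor carries a time derivative ($u_t,b_t$, all controlled by $e_0$), while $\Delta G,\partial_\theta F,\partial_\theta G$ carry extra spatial or angular derivatives. I would bound these using the product and commutator inequalities of Lemma \ref{lem2.2}, invoking the transport cancellation $\langle u\cdot\nabla\Lambda^{k}w,\Lambda^{k}w\rangle=0$ from incompressibility (so only commutators survive and no derivative is lost), and systematically converting each low-order norm into its $\partial_\theta$-counterpart through the Poincar\'e inequalities of Lemma \ref{lem2.1} and (\ref{007}). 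The decisive point is that two independent families of square-integrable-in-time norms are available: the $\partial_\theta$- and $(u_t,b_t)$-dissipations inside $e_0$, and the high-order dissipation $\int_0^t\|\nabla u\|_{\dot H^{2s+6}}^2\md\tau\le E_0$. Every cubic time integral can therefore be closed by a Cauchy--Schwarz placing two factors in $L^2_t$ and the third in $L^\infty_t$, and the various ways of distributing derivatives (through Gagliardo--Nirenberg) between these two families produce precisely the four powers $e_0^{3/2}$, $E_0^{1/2}e_0$, $E_0e_0^{1/2}$ and $E_0^{3/4}e_0^{3/4}$ in (\ref{3}).

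The main obstacle I anticipate is the forcing $-\Delta G$ in the $b$-equation, which naively places two extra spatial derivatives on a quadratic term. I would handle it by integrating by parts to split the two derivatives symmetrically between $G$ and the test function, so that in Estimate A it pairs as $-\langle\Lambda^{2m+1}G,\Lambda^{2m+1}b_t\rangle$, supplying the dissipation factor $\|b_t\|_{\dot H^{2m+1}}$, and in Estimate B as $-\langle\Lambda^{2m+2}G,\Lambda^{2m+2}b\rangle$, pairing against the sup-level $\|b\|_{\dot H^{2m+2}}$; in either case the surviving high-order factor of $G$ needs $u,b$ only up to regularity $2s+3<2s+6$, hence is absorbed into $E_0^{1/2}$. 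The genuinely delicate step is then choosing the Gagliardo--Nirenberg exponents and the Poincar\'e substitutions so that each time integral closes with exactly two $L^2_t$ factors and no residual $t^{1/2}$ growth, which is what pins down the specific form of the cubic bounds.
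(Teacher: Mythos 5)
Your proposal is correct and follows essentially the same route as the paper: the paper likewise tests the damped wave system (\ref{3.1}) first with $(\Delta^{2m}u_t,\Delta^{2m}b_t)$ and then with $(-\Delta^{2m+1}u,-\Delta^{2m+1}b)$, combines the two identities with a suitable weight to obtain the dissipation of both $\|u_t,b_t\|_{\dot H^{2m+1}}$ and $\|\partial_\theta u,\partial_\theta b\|_{\dot H^{2m+1}}$ (its (\ref{3.5})), and closes the nonlinear terms $N_1,N_2$ — including $-\Delta G$ written in divergence form — via Lemma \ref{lem2.2}, the Poincar\'e inequalities (\ref{002}) and (\ref{007}), and interpolation, arriving at the same four cubic powers. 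No gap to report.
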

\begin{proof}
Recall the damped wave-type system (\ref{3.1}):
\begin{align*}
\begin{cases}
{ \begin{array}{ll} u_{tt}-\Delta u_t-\partial_{\theta\theta} u+\nabla q_1=F_t-\partial_{\theta} G, \\
b_{tt}-\Delta b_t-\partial_{\theta\theta} b+\nabla q_2=G_t-\Delta G-\partial_{\theta} F,
 \end{array} }
\end{cases}
\end{align*}
where
$F=b\cdot\nabla b-u\cdot\nabla u$,
$G=b\cdot\nabla u-u\cdot\nabla b$
are nonlinear terms.

To get the positive energy estimates including the time integrability of $\partial_\theta u$ and $\partial_\theta b$, we need to make two estimates, multiply the results by appropriate numbers and add them up.
Firstly, taking $L^2$ inner product with $(\Delta^{2m} u_t,\Delta^{2m} b_t)$ for the above equations, one has
\begin{align}\label{3.2}
\begin{aligned}
\frac{1}{2}\frac{\md}{\md t}\big(\|u_t,b_t\|_{\dot{H}^{2m}}^2+\|\partial_\theta u,\partial_\theta b\|_{\dot{H}^{2m}}^2\big)
+\|u_t, b_t\|_{\dot{H}^{2m+1}}^2
=N_1.
\end{aligned}
\end{align}
Secondly, taking $L^2$ inner product with $(-\Delta^{2m+1} u,-\Delta^{2m+1} b)$ for (\ref{3.1}), we obtain
\begin{align}\label{3.3}
\begin{aligned}
\frac{\md}{\md t}\big(-\langle\Delta^{2m+1}u, u_t\rangle
-\langle\Delta^{2m+1}b, b_t\rangle
+\frac{1}{2}\|u,b\|_{\dot{H}^{2m+2}}^2\big)\\
-\|u_t,b_t\|_{\dot{H}^{2m+1}}^2+\|\partial_\theta u,\partial_\theta b\|_{\dot{H}^{2m+1}}^2
=N_2.
\end{aligned}
\end{align}
Here $N_i$ $(i=1,2)$ are nonlinear terms:
\begin{align*}
\begin{aligned}
N_1=&\langle\Delta^{2m}u_t, F_t-\partial_{\theta} G\rangle+\langle\Delta^{2m}b_t, G_t-\Delta G-\partial_{\theta} F\rangle,\\
N_2=&-\langle\Delta^{2m+1}u, F_t-\partial_{\theta} G\rangle-\langle\Delta^{2m+1}b, G_t-\Delta G-\partial_{\theta} F\rangle.\\
\end{aligned}
\end{align*}
Thirdly, multiplying (\ref{3.2}) by a suitable number and adding (\ref{3.3}),
and noting the following inequalities:
\begin{align*}
\big|\langle\Delta^{2m+1}u, u_t\rangle\big|
\lesssim& \|u\|_{\dot{H}^{2m+2}}\|u_t\|_{\dot{H}^{2m}},\\
\big|\langle\Delta^{2m+1}b, b_t\rangle\big|
\lesssim& \|b\|_{\dot{H}^{2m+2}}\|b_t\|_{\dot{H}^{2m}}.
\end{align*}
Further from Young's inequality, we can deduce that
\begin{align}\label{3.5}
\begin{aligned}
\frac{\md}{\md t}
\big(\|u_t,b_t\|_{\dot{H}^{2m}}^2+\|\partial_\theta u,\partial_\theta b\|_{\dot{H}^{2m}}^2
+\|u,b\|_{\dot{H}^{2m+2}}^2\big)\\
+\|u_t, b_t\|_{\dot{H}^{2m+1}}^2
+\|\partial_\theta u,\partial_\theta b\|_{\dot{H}^{2m+1}}^2
\lesssim N_1+N_2.
\end{aligned}
\end{align}
Next, we deal with the nonlinear term $N_1$. Since $u_t$, $b_t$, $\partial_\theta u$ and $\partial_\theta b$ have the same regularity on the left-hand side of (\ref{3.5}), for brevity, we will put the same order derivative terms together and estimate only one of them. Let us start with the lower-order derivative terms:
\begin{align}\label{3.7}
\begin{aligned}
&\langle\Delta^{2m}u_t, F_t-\partial_{\theta} G\rangle+\langle\Delta^{2m}b_t, G_t-\partial_{\theta} F\rangle\\
=&\langle\Delta^mu_t, \Delta^m(b\cdot\nabla b)_t\rangle
-\langle\Delta^mu_t,\Delta^m(u\cdot\nabla u)_t\rangle\\
&-\langle\Delta^mu_t, \Delta^m\partial_\theta(b\cdot\nabla u)\rangle
+\langle\Delta^mu_t,\Delta^m\partial_\theta(u\cdot\nabla b)\rangle\\
&+\langle\Delta^mb_t, \Delta^m(b\cdot\nabla u)_t\rangle
-\langle\Delta^mb_t,\Delta^m(u\cdot\nabla b)_t\rangle\\
&-\langle\Delta^mb_t, \Delta^m\partial_\theta(b\cdot\nabla b)\rangle
+\langle\Delta^mb_t,\Delta^m\partial_\theta(u\cdot\nabla u)\rangle.
\end{aligned}
\end{align}
We only estimate the first term on the right-hand side of (\ref{3.7}). The remaining terms can be treated in a similar way with some modifications and we will omit the details. To overcome the lack of the space-time $L^2$ norm for $u_t$, $b_t$, $\partial_\theta u$ and $\partial_\theta b$, we handle it in two cases: $m\geq 1$ and $m=0$.

For $m\geq 1$,
by the divergence-free condition, Lemma \ref{lem2.2}, Sobolev imbedding theorem and (\ref{007}), we have
\begin{align*}
\begin{aligned}
&\langle\Delta^mu_t, \Delta^m(b\cdot\nabla b)_t\rangle\\
=&\langle\nabla^{2m}u_t, \nabla^{2m+1}(b\otimes b)_t\rangle\\
\lesssim&\|u_t\|_{\dot{H}^{2m}}\big(\|b\|_{\dot{H}^{2m+1}}\|b_t\|_{L^\infty}
+\|b\|_{L^\infty}\|b_t\|_{\dot{H}^{2m+1}}\big)\\
\lesssim&\|u_t\|_{\dot{H}^{2m}}\big(\|\partial_\theta b\|_{\dot{H}^{2m+1}}\|b_t\|_{H^2}
+\|\partial_\theta b\|_{H^2}\|b_t\|_{\dot{H}^{2m+1}}\big).
\end{aligned}
\end{align*}
Hence
\begin{align*}
\begin{aligned}
&\int_0^t\langle\Delta^mu_\tau, \Delta^m(b\cdot\nabla b)_\tau\rangle\md\tau\\
\leq&\sup_{0\leq\tau\leq t}\|b_\tau\|_{H^2}
\big(\int_0^t\|u_\tau\|_{\dot{H}^{2m}}^2\md\tau\big)^{\frac{1}{2}}
\big(\int_0^t\|\partial_\theta b\|_{\dot{H}^{2m+1}}\md\tau\big)^{\frac{1}{2}}\\
&+\sup_{0\leq\tau\leq t}\|\partial_\theta b\|_{H^2}
\big(\int_0^t\|u_\tau\|_{\dot{H}^{2m}}^2\md\tau\big)^{\frac{1}{2}}
\big(\int_0^t\|b_\tau\|_{\dot{H}^{2m+1}}\md\tau\big)^{\frac{1}{2}}\\
\lesssim& e_0(t)^{\frac{3}{2}},
\end{aligned}
\end{align*}
where we have used the interpolation inequality:
$$\|u_t\|_{\dot{H}^{2m}}
\lesssim\|u_t\|_{\dot{H}^{2m-1}}^{\frac{1}{2}}\|u_t\|_{\dot{H}^{2m+1}}^{\frac{1}{2}}.$$

For $m=0$, using H\"{o}lder's inequality, Sobolev imbedding theorem and (\ref{007}), we obtain
\begin{align*}
\begin{aligned}
&\langle u_t, (b\cdot\nabla b)_t\rangle\\
=&\langle u_t, b_t\cdot\nabla b+b\cdot\nabla b_t\rangle\\
\lesssim&\|u_t\|_{L^4}\|b_t\|_{L^4}\|\nabla b\|_{L^2}
+\|u\|_{L^4}\|b_t\|_{L^4}\|\nabla b_t\|_{L^2}\\
\lesssim&\|u_t\|_{L^2}^{\frac{1}{2}}\|u_t\|_{\dot{H}^1}^{\frac{1}{2}}
\|b_t\|_{L^2}^{\frac{1}{2}}\|b_t\|_{\dot{H}^1}^{\frac{1}{2}}\|\partial_\theta b\|_{\dot{H}^1}
+\|u_t\|_{L^2}^{\frac{1}{2}}\|u_t\|_{\dot{H}^1}^{\frac{1}{2}}
\|\partial_\theta b\|_{L^2}^{\frac{1}{2}}
\|\partial_\theta b\|_{\dot{H}^1}^{\frac{1}{2}}\|b_t\|_{\dot{H}^1}.
\end{aligned}
\end{align*}
Therefore
\begin{align*}
\begin{aligned}
&\int_0^t\langle u_\tau, (b\cdot\nabla b)_\tau\rangle\md\tau\\
\lesssim&\sup_{0\leq\tau\leq t}\|u_\tau\|_{L^2}^{\frac{1}{2}}\|b_\tau\|_{L^2}^{\frac{1}{2}}
\big(\int_0^t\|u_\tau\|_{\dot{H}^1}^2\md\tau\big)^{\frac{1}{4}}
\big(\int_0^t\|b_\tau\|_{\dot{H}^1}^2\md\tau\big)^{\frac{1}{4}}
\big(\int_0^t\|\partial_\theta b\|_{\dot{H}^1}^2\md\tau\big)^{\frac{1}{2}}\\
&+\sup_{0\leq\tau\leq t}\|u_\tau\|_{L^2}^{\frac{1}{2}}\|\partial_\theta b\|_{L^2}^{\frac{1}{2}}
\big(\int_0^t\|u_\tau\|_{\dot{H}^1}^2\md\tau\big)^{\frac{1}{4}}
\big(\int_0^t\|\partial_\theta b\|_{\dot{H}^1}^2\md\tau\big)^{\frac{1}{4}}
\big(\int_0^t\|b_\tau\|_{\dot{H}^1}^2\md\tau\big)^{\frac{1}{2}}\\
\lesssim& e_0(t)^{\frac{3}{2}}.
\end{aligned}
\end{align*}

We now handle the remaining higher-order term of $N_1$. From the incompressible constraints, Lemma \ref{lem2.2}, interpolation inequalities, Sobolev imbedding theorem and (\ref{007}), we deduce that
\begin{align*}
\begin{aligned}
&\langle\Delta^{2m}b_t, -\Delta G\rangle\\
=&\langle\nabla^{2m+1}b_t, \nabla^{2m+2}(b\otimes u-u\otimes b)\rangle\\
\lesssim&\|b_t\|_{\dot{H}^{2m+1}}\big(\|u\|_{L^\infty}\|b\|_{\dot{H}^{2m+2}}
+\|b\|_{L^\infty}\|u\|_{\dot{H}^{2m+2}}\big)\\
\lesssim&\|b_t\|_{\dot{H}^{2m+1}}\big(\|u\|_{L^2}^{\frac{2}{3}}\|\partial_\theta u\|_{\dot{H}^3}^{\frac{1}{3}}
\|\partial_\theta b\|_{\dot{H}^{2m+1}}^{\frac{1}{2}}\|b\|_{\dot{H}^{2m+3}}^{\frac{1}{2}}
+\|b\|_{H^2}\|u\|_{\dot{H}^{2m+2}}\big).
\end{aligned}
\end{align*}
Therefore
\begin{align*}
\begin{aligned}
&\int_0^t\langle\Delta^{2m}b_\tau, -\Delta G\rangle\md\tau\\
\lesssim&\sup_{0\leq\tau\leq t}\|u\|_{L^2}^{\frac{1}{2}}
\|b\|_{H^{2s+3}}^{\frac{1}{2}}
\big(\int_0^t\|\nabla u\|_{H^1}^2\md\tau\big)^{\frac{1}{4}}
\big(\int_0^t\|b_\tau\|_{\dot{H}^{2m+1}}^2\md\tau\big)^{\frac{1}{2}}
\big(\int_0^t\|\partial_\theta b\|_{\dot{H}^{2m+1}}^2\md\tau\big)^{\frac{1}{4}}\\
&+\sup_{0\leq\tau\leq t}\|b\|_{H^2}
\big(\int_0^t\|\nabla u\|_{H^{2s+1}}^2\md\tau\big)^{\frac{1}{2}}
\big(\int_0^t\|b_\tau\|_{\dot{H}^{2m+1}}^2\md\tau\big)^{\frac{1}{2}}\\
\lesssim& E_0(t)^{\frac{3}{4}}e_0(t)^{\frac{3}{4}}+E_0(t)e_0(t)^{\frac{1}{2}}.
\end{aligned}
\end{align*}

Like $N_1$, we group the terms of $N_2$ based on the order of the spatial derivative. The same lower-order derivative terms are as follows:
\begin{align}\label{003}
\begin{aligned}
&\langle\Delta^{2m+1}u, F_t-\partial_{\theta} G\rangle+\langle\Delta^{2m+1}b, G_t-\partial_{\theta} F\rangle\\
=&-\langle\nabla^{2m+1}u, \nabla^{2m+1}(b\cdot\nabla b)_t\rangle
+\langle\nabla^{2m+1}u,\nabla^{2m+1}(u\cdot\nabla u)_t\rangle\\
&+\langle\nabla^{2m+1}u, \nabla^{2m+1}\partial_\theta(b\cdot\nabla u)\rangle
-\langle\nabla^{2m+1}u,\nabla^{2m+1}\partial_\theta(u\cdot\nabla b)\rangle\\
&-\langle\nabla^{2m+1}b, \nabla^{2m+1}(b\cdot\nabla u)_t\rangle
+\langle\nabla^{2m+1}b,\nabla^{2m+1}(u\cdot\nabla b)_t\rangle\\
&+\langle\nabla^{2m+1}b, \nabla^{2m+1}\partial_\theta(b\cdot\nabla b)\rangle
-\langle\nabla^{2m+1}b,\nabla^{2m+1}\partial_\theta(u\cdot\nabla u)\rangle.
\end{aligned}
\end{align}
It can be seen that the maximum order of derivatives for $u_t$, $b_t$, $\partial_\theta u$ and $\partial_\theta b$ in above is $2m+2$, which exceeds their regularity on the left-hand side of (\ref{3.5}).
Our strategy is to decrease the derivative of $u_t$, $b_t$, $\partial_\theta u$ and $\partial_\theta b$ from the order of $2m+2$ to $2m+1$ through integration by parts.
On the other hand, using interpolation inequalities,
we can control the $2m+2$ derivative of $u, b$ via the higher-order energy $E_0(t)$.
Next, we only estimate the first term of (\ref{003}). The remaining terms can be estimated in a similar manner with appropriate adjustments. Further details will be omitted.
By utilizing $\nabla\cdot b=0$, Lemma \ref{lem2.2}, Sobolev imbedding theorem and (\ref{007}), we obtain
\begin{align*}
\begin{aligned}
&-\langle\nabla^{2m+1}u, \nabla^{2m+1}(b\cdot\nabla b)_t\rangle\\
=&\langle\nabla^{2m+2}u, \nabla^{2m+1}(b\otimes b)_t\rangle\\
\lesssim&\|u\|_{\dot{H}^{2m+2}}\big(\|b\|_{\dot{H}^{2m+1}}\|b_t\|_{L^\infty}
+\|b\|_{L^\infty}\|b_t\|_{\dot{H}^{2m+1}}\big)\\
\lesssim&\|u\|_{\dot{H}^{2m+2}}\big(\|\partial_\theta b\|_{\dot{H}^{2m+1}}\|b_t\|_{H^2}
+\|\partial_\theta b\|_{H^2}\|b_t\|_{\dot{H}^{2m+1}}\big).
\end{aligned}
\end{align*}
Hence
\begin{align*}
\begin{aligned}
&\int_0^t-\langle\nabla^{2m+1}u, \nabla^{2m+1}(b\cdot\nabla b)_\tau\rangle\md\tau\\
\lesssim&\sup_{0\leq\tau\leq t}
\|b_\tau\|_{H^2}
\big(\int_0^t\|\nabla u\|_{H^{2s+1}}^2\md\tau\big)^{\frac{1}{2}}
\big(\int_0^t\|\partial_\theta b\|_{\dot{H}^{2m+1}}^2\md\tau\big)^{\frac{1}{2}}\\
&+\sup_{0\leq\tau\leq t}\|\partial_\theta b\|_{H^2}
\big(\int_0^t\|\nabla u\|_{H^{2s+1}}^2\md\tau\big)^{\frac{1}{2}}
\big(\int_0^t\|b_\tau\|_{\dot{H}^{2m+1}}^2\md\tau\big)^{\frac{1}{2}}\\
\lesssim& E_0(t)^{\frac{1}{2}}e_0(t).
\end{aligned}
\end{align*}

For the remaining term of $N_2$, using Lemma \ref{lem2.2}, interpolation inequalities, Sobolev imbedding theorem and (\ref{007}), we have
\begin{align*}
&\langle\Delta^{2m+1}b, \Delta G\rangle\\
=&\langle\nabla^{2m+2}b, \nabla^{2m+2}(b\cdot\nabla u-u\cdot\nabla b)\rangle\\
=&\langle\nabla^{2m+2}b, \nabla^{2m+2}(b\cdot\nabla u)\rangle
-\langle\nabla^{2m+2}b, [\nabla^{2m+2},u\cdot\nabla]b\rangle\\
\lesssim&\|b\|_{\dot{H}^{2m+2}}\big(\|b\|_{\dot{H}^{2m+2}}\|\nabla u\|_{L^\infty}
+\|b\|_{L^\infty}\|\nabla u\|_{\dot{H}^{2m+2}}
+\|u\|_{\dot{H}^{2m+2}}\|\nabla b\|_{L^\infty}\big)\\
\lesssim&\|\partial_\theta b\|_{\dot{H}^{2m+1}}\|b\|_{\dot{H}^{2m+3}}\|\nabla u\|_{H^2}
+\|\partial_\theta b\|_{\dot{H}^{2m+1}}^{\frac{2}{3}}\|b\|_{\dot{H}^{2m+4}}^{\frac{1}{3}}
\|b\|_{L^2}^{\frac{2}{3}}\|\partial_\theta b\|_{\dot{H}^3}^{\frac{1}{3}}
\|\nabla u\|_{\dot{H}^{2m+2}}\\
&+\|b\|_{\dot{H}^{2m+2}}
\|u\|_{\dot{H}^{2m+2}}
\|\nabla \partial_\theta b\|_{H^2}.
\end{align*}
Therefore
\begin{align*}
&\int_0^t\langle\Delta^{2m+1}b, \Delta G\rangle\md\tau\\
\lesssim&\sup_{0\leq\tau\leq t}\|b\|_{H^{2s+3}}
\big(\int_0^t\|\partial_\theta b\|_{\dot{H}^{2m+1}}^2\md\tau\big)^{\frac{1}{2}}
\big(\int_0^t\|\nabla u\|_{H^2}^2\md\tau\big)^{\frac{1}{2}}\\
&+\sup_{0\leq\tau\leq t}\|b\|_{L^2}^{\frac{2}{3}}\|b\|_{H^{2s+4}}^{\frac{1}{3}}
\big(\int_0^t\|\partial_\theta b\|_{\dot{H}^{2m+1}}^2\md\tau\big)^{\frac{1}{3}}
\big(\int_0^t\|\partial_\theta b\|_{\dot{H}^3}^2\md\tau\big)^{\frac{1}{6}}
\big(\int_0^t\|\nabla u\|_{H^{2s+2}}^2\md\tau\big)^{\frac{1}{2}}\\
&+\sup_{0\leq\tau\leq t}
\|b\|_{H^{2s+2}}
\big(\int_0^t\|\nabla u\|_{H^{2s+1}}^2\md\tau\big)^{\frac{1}{2}}
\big(\int_0^t\|\nabla \partial_\theta b\|_{H^2}^2\md\tau\big)^{\frac{1}{2}}\\
\lesssim& E_0(t)e_0(t)^{\frac{1}{2}}+E_0(t)^{\frac{1}{2}}e_0(t).
\end{align*}

Integrating (\ref{3.5}) in time and using the above estimates gives
\begin{align}
\begin{aligned}
&\sup_{0\leq\tau\leq t}
\big(\|u_t,b_t\|_{\dot{H}^{2m}}^2+\|\partial_\theta u,\partial_\theta b\|_{\dot{H}^{2m}}^2
+\|u,b\|_{\dot{H}^{2m+2}}^2\big)\\
&+\int_0^t\big(\|u_\tau,b_\tau\|_{\dot{H}^{2m+1}}^2
+\|\partial_\theta u,\partial_\theta b\|_{\dot{H}^{2m+1}}^2\big)\md\tau\\
\lesssim&\varepsilon^2+e_0(t)^{\frac{3}{2}}
+E_0(t)^{\frac{1}{2}}e_0(t)
+E_0(t)e_0(t)^{\frac{1}{2}}
+E_0(t)^{\frac{3}{4}}e_0(t)^{\frac{3}{4}}.
\end{aligned}
\end{align}
So far, we have successfully proved this lemma.
\end{proof}

Finally, we establish the decay estimates.
\begin{lem}\label{lem3.4}
From the definition in (\ref{2.4}), for $1\leq m\leq s-1$, $s\geq 2$, and $\frac{3}{23}\leq\sigma<1$, we have
\begin{align}\label{4}
e_1(t)\lesssim\varepsilon^2+e_0(t)
+e_0(t)^{\frac{1}{2}}e_1(t)
+E_0(t)^{\frac{13}{72}}
E_1(t)^{\frac{1}{3+\sigma}}
e_1(t)^{\frac{1+\sigma}{2(3+\sigma)}+\frac{59}{72}}.
\end{align}
\end{lem}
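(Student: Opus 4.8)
The plan is to re-run the energy scheme of Lemma~\ref{lem3.3}, this time weighted in time by $(1+\tau)^2$. Fix $m$ with $1\le m\le s-1$ and write $\mathcal{E}_m=\|u_\tau,b_\tau\|_{\dot H^{2m}}^2+\|\partial_\theta u,\partial_\theta b\|_{\dot H^{2m}}^2+\|u,b\|_{\dot H^{2m+2}}^2$ and $\mathcal{D}_m=\|u_\tau,b_\tau\|_{\dot H^{2m+1}}^2+\|\partial_\theta u,\partial_\theta b\|_{\dot H^{2m+1}}^2$, so that \eqref{3.5} reads $\frac{\md}{\md\tau}\mathcal{E}_m+\mathcal{D}_m\lesssim N_1+N_2$. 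First I would multiply by $(1+\tau)^2$ and use $(1+\tau)^2\frac{\md}{\md\tau}\mathcal{E}_m=\frac{\md}{\md\tau}\big[(1+\tau)^2\mathcal{E}_m\big]-2(1+\tau)\mathcal{E}_m$, then integrate on $[0,t]$ and sum over $1\le m\le s-1$. This yields $e_1(t)\lesssim\varepsilon^2+\sum_m\int_0^t(1+\tau)\mathcal{E}_m\,\md\tau+\sum_m\int_0^t(1+\tau)^2(N_1+N_2)\,\md\tau$, where the data bound \eqref{**} controls the initial contribution $\varepsilon^2$.

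The weight-derivative remainder $\int_0^t(1+\tau)\mathcal{E}_m\,\md\tau$ is the source of the linear term $e_0(t)$. I would bound $\mathcal{E}_m$ by dissipation at the neighbouring levels $m-1,m,m+1$ using interpolation together with the Poincar\'e inequalities \eqref{002}, \eqref{007}; for instance $\|u,b\|_{\dot H^{2m+2}}^2\lesssim\|\partial_\theta u,\partial_\theta b\|_{\dot H^{2m+1}}\|\partial_\theta u,\partial_\theta b\|_{\dot H^{2m+3}}$ and $\|u_\tau,b_\tau\|_{\dot H^{2m}}^2\lesssim\|u_\tau,b_\tau\|_{\dot H^{2m-1}}\|u_\tau,b_\tau\|_{\dot H^{2m+1}}$, all factors being dissipation terms at levels in $\{0,\dots,s\}$. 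Splitting $(1+\tau)=(1+\tau)\cdot 1$ and applying Cauchy--Schwarz in $\tau$ sends one factor into a weighted dissipation integral ($\lesssim e_1(t)^{1/2}$) and the other into an unweighted one ($\lesssim e_0(t)^{1/2}$); Young's inequality then gives $\lesssim\eta\,e_1(t)+C_\eta\,e_0(t)$, and the $\eta\,e_1(t)$ piece is absorbed on the left. This leaves precisely the term $+e_0(t)$.

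For the weighted nonlinear terms $\int_0^t(1+\tau)^2(N_1+N_2)\,\md\tau$ I would copy the term-by-term analysis of Lemma~\ref{lem3.3}, the only change being that the factor $(1+\tau)^2$ is assigned to one of the space--time integrals that appear there. Each weighted integral $\big(\int_0^t(1+\tau)^2\|\cdot\|^2\,\md\tau\big)^{1/2}$ then becomes an $e_1(t)^{1/2}$-contribution, while unweighted sup-in-time norms and unweighted integrals stay $e_0(t)$- or $E_0(t)$-contributions; by \eqref{002} and \eqref{007} every $u,b$ norm without a $\partial_\theta$ is first converted into a $\partial_\theta$ norm so that it feeds the energy functionals. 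The generic outcome is a product in which at least one factor is a small quantity (a positive power of $e_0(t)$ or $E_0(t)$) multiplying $e_1(t)$, the representative case being $e_0(t)^{1/2}e_1(t)$, which closes by smallness in the bootstrap of Section~4.

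The hard part will be the single worst term, namely the weighted analogue of the $\Delta G$/$-\Delta G$ contributions to $N_2,N_1$ in which a top-order factor of $u$ (morally $\|\nabla u\|_{L^\infty}$) appears: in Lemma~\ref{lem3.3} this factor was absorbed by the unweighted dissipation $\int_0^t\|\nabla u\|_{H^{2s+1}}^2\,\md\tau\subset E_0(t)$, but here the extra weight $(1+\tau)^2$ has no unweighted partner and that route is closed. My remedy is exactly the device of \eqref{key}: estimate $\|\nabla u\|_{L^\infty}\lesssim\|u\|_{\dot H^{1-\sigma}}^{\frac{1}{2+\sigma}}\|\partial_\theta u\|_{\dot H^3}^{\frac{\sigma}{2(2+\sigma)}+\frac12}$ and then split $(1+\tau)^2$ across the factors so that $\int_0^t\|u\|_{\dot H^{1-\sigma}}^2\,\md\tau$ feeds $E_1(t)$, the weighted integral $\int_0^t(1+\tau)^2\|\partial_\theta u\|_{\dot H^3}^2\,\md\tau$ feeds $e_1(t)$, and the surviving high-order norms of $b$ feed $E_0(t)$ and $e_1(t)$. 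The whole difficulty is then the bookkeeping of the powers of $(1+\tau)$: the H\"older exponents must be chosen so that (i) the residual power of $(1+\tau)$ is integrable on $[0,\infty)$ and (ii) the total exponent of $e_1(t)$ does not exceed $1$. Carrying out this balance is what fixes the exponents $\tfrac{13}{72}$ on $E_0(t)$, $\tfrac{1}{3+\sigma}$ on $E_1(t)$ and $\tfrac{1+\sigma}{2(3+\sigma)}+\tfrac{59}{72}$ on $e_1(t)$, and forces the restriction $\sigma\ge\tfrac{3}{23}$; at the endpoint $\sigma=\tfrac{3}{23}$ the $e_1(t)$-exponent equals $1$, so closure relies on the smallness of the prefactor $E_0(t)^{13/72}E_1(t)^{1/(3+\sigma)}$ rather than on a sub-unit power. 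Summing the three types of contributions over $m$ gives \eqref{4}.
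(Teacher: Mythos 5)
Your overall scheme --- multiplying \eqref{3.5} by $(1+\tau)^2$, absorbing the linear remainder $\int_0^t(1+\tau)\mathcal{E}_m\,\md\tau$ via interpolation and Young's inequality into a small multiple of the weighted dissipation plus $Ce_0(t)$, and then redistributing the weight $(1+\tau)^2$ over the factors of each nonlinear term --- is exactly the paper's, and your use of $m\ge1$ and $m\le s-1$ for the two remainder terms is the right bookkeeping. The gap is in your plan for the single term that actually produces the exotic exponents. That term is not the one containing $\|\nabla u\|_{L^\infty}$: in the weighted setting $(1+\tau)^2\|b\|_{\dot H^{2m+2}}^2\|\nabla u\|_{L^\infty}$ closes as $e_0(t)^{1/2}e_1(t)$ with no recourse to $E_1$, since $\|b\|_{\dot H^{2m+2}}^2\lesssim(1+\tau)\|b\|_{\dot H^{2m+2}}\,\|\partial_\theta b\|_{\dot H^{2m+1}}^{1/2}\|\partial_\theta b\|_{\dot H^{2m+3}}^{1/2}$ and $\|\nabla u\|_{L^\infty}\lesssim\|\partial_\theta u\|_{\dot H^1}^{1/2}\|\partial_\theta u\|_{\dot H^3}^{1/2}$ already carry the full weight. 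The genuinely wild term, from $(1+t)^2\langle\Delta^{2m+1}b,\Delta G\rangle$ in \eqref{mild}, is $(1+t)^2\|b\|_{\dot H^{2m+2}}\|b\|_{L^\infty}\|\nabla u\|_{\dot H^{2m+2}}$, whose top-order factor is $\|\nabla u\|_{\dot H^{2m+2}}$ (order $2m+3$ on $u$), to which your interpolation $\|\nabla u\|_{L^\infty}\lesssim\|u\|_{\dot H^{1-\sigma}}^{1/(2+\sigma)}\|\partial_\theta u\|_{\dot H^3}^{\,\sigma/(2(2+\sigma))+1/2}$ from \eqref{key} simply does not apply.

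More importantly, your proposed entry point for $E_1$ --- the unweighted dissipation $\int_0^t\|u\|_{\dot H^{1-\sigma}}^2\,\md\tau$ --- cannot resolve the actual obstruction, which is a weight deficit rather than a regularity deficit. The factor $\|\nabla u\|_{\dot H^{2m+2}}$ can only be split as $\|\partial_\theta u\|_{\dot H^{2m+1}}^{3/4}\|\nabla u\|_{\dot H^{2m+8}}^{1/4}$, so it carries at most weight $3/4$, and an unweighted time integral (whether from $E_0$ or from the dissipation part of $E_1$) carries weight $0$; the remaining weight $5/4$ must be borne by $\|b\|_{\dot H^{2m+2}}$ and $\|b\|_{L^\infty}$. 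The paper's device is therefore applied to $b$, not $u$: it uses the \emph{sup} part of $E_1$ through $\|b\|_{L^\infty}\lesssim\|b\|_{\dot H^{-\sigma}}^{2/(3+\sigma)}\|b\|_{\dot H^{3}}^{(1+\sigma)/(3+\sigma)}$, so that the fraction $\tfrac{1+\sigma}{3+\sigma}>\tfrac13$ of this factor lands on a norm that can carry weight; the budget $\tfrac29+\tfrac23+\tfrac{1+\sigma}{3+\sigma}+\tfrac34\ge2$ is precisely $\sigma\ge\tfrac{3}{23}$, and it produces $E_1(t)^{1/(3+\sigma)}$ rather than the $E_1(t)^{1/(2(2+\sigma))}$ your route would give. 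Finally, your constraint (ii) points the wrong way: the $e_1$-exponent $\tfrac{1+\sigma}{2(3+\sigma)}+\tfrac{59}{72}$ is increasing in $\sigma$ and already equals $1$ at $\sigma=\tfrac{3}{23}$, so it exceeds $1$ throughout the admissible range; closure is unaffected because the total degree of the term is $3/2$, feeding the bootstrap $E(t)\le CE(0)+CE(t)^{3/2}$.
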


\begin{proof}
This proof is similar to Lemma \ref{lem3.3}.
The difference is that after multiplying the time-weight $(1+t)^2$, extra linear terms will appear on the right-hand side of (\ref{3.6}), in comparison to (\ref{3.5}).
By utilizing Gagliardo-Nirenberg inequalities and Young's inequality, we can easily control these terms.
Following the same steps as deriving the inequality (\ref{3.5}), we obtain
\begin{align}\label{3.6}
\begin{aligned}
&\frac{\md}{\md t}(1+t)^2
\big(\|u_t,b_t\|_{\dot{H}^{2m}}^2+\|\partial_\theta u,\partial_\theta b\|_{\dot{H}^{2m}}^2
+\|u,b\|_{\dot{H}^{2m+2}}^2\big)\\
&+(1+t)^2\big(\|u_t, b_t\|_{\dot{H}^{2m+1}}^2
+\|\partial_\theta u,\partial_\theta b\|_{\dot{H}^{2m+1}}^2\big)\\
\lesssim&(1+t)\big(\|u_t,b_t\|_{\dot{H}^{2m}}^2
+\|\partial_\theta u,\partial_\theta b\|_{\dot{H}^{2m}}^2
+\|u,b\|_{\dot{H}^{2m+2}}^2\big)
+(1+t)^2(N_1+N_2).
\end{aligned}
\end{align}
We will begin with estimating the linear terms resulting from the time-weight.
Applying the interpolation inequality and Young's inequality, it can be inferred that
\begin{align*}
&\int_0^t(1+\tau)\|u_\tau,b_\tau\|_{\dot{H}^{2m}}^2\md\tau\\
\leq &C\int_0^t(1+\tau)\|u_\tau,b_\tau\|_{\dot{H}^{2m-1}}\|u_\tau,b_\tau\|_{\dot{H}^{2m+1}}\md\tau\\
\leq&\frac{1}{8}\int_0^t(1+\tau)^2\|u_\tau,b_\tau\|_{\dot{H}^{2m+1}}^2\md\tau
+C\int_0^t\|u_\tau,b_\tau\|_{\dot{H}^{2m-1}}^2\md\tau\\
\leq&\frac{1}{8}\int_0^t(1+\tau)^2\|u_\tau,b_\tau\|_{\dot{H}^{2m+1}}^2\md\tau+Ce_0(t),
\end{align*}
provided $m\geq1$.
Similarly, the second linear term can be bound as follows:
\begin{align*}
&\int_0^t(1+\tau)\|\partial_\theta u,\partial_\theta b\|_{\dot{H}^{2m}}^2\md\tau\\
\leq&\frac{1}{8}\int_0^t(1+\tau)^2\|\partial_\theta u,\partial_\theta b\|_{\dot{H}^{2m+1}}^2\md\tau+Ce_0(t).
\end{align*}
For the third linear term, using the interpolation inequality, Young's inequality, Lemma \ref{lem2.1} and (\ref{007}), one has
\begin{align*}
&\int_0^t(1+\tau)\|u,b\|_{\dot{H}^{2m+2}}^2\md\tau\\
\leq&C\int_0^t(1+\tau)\|u,b\|_{\dot{H}^{2m+1}}\|u,b\|_{\dot{H}^{2m+3}}\md\tau\\
\leq&\frac{1}{8}\int_0^t(1+\tau)^2\|u,b\|_{\dot{H}^{2m+1}}^2\md\tau
+C\int_0^t\|u,b\|_{\dot{H}^{2m+3}}^2\md\tau\\
\leq&\frac{1}{8}\int_0^t(1+\tau)^2\|\partial_\theta u,\partial_\theta b\|_{\dot{H}^{2m+1}}^2\md\tau+Ce_0(t),
\end{align*}
provided $m\leq s-1$.

We now turn to the nonlinear terms. The difference from the proof in Lemma \ref{lem3.3} is that in this instance, a reasonable distribution of the time-weight $(1+t)^2$ also needs to be considered.
For $(1+t)^2N_1$, we begin by examining the lower-order derivative terms:
\begin{align*}
&(1+t)^2\langle\Delta^{2m}u_t, F_t-\partial_{\theta} G\rangle+\langle\Delta^{2m}b_t, G_t-\partial_{\theta} F\rangle\\
=&(1+t)^2\langle\nabla^{2m}u_t, \nabla^{2m}(b\cdot\nabla b)_t\rangle
-(1+t)^2\langle\nabla^{2m}u_t,\nabla^{2m}(u\cdot\nabla u)_t\rangle\\
&-(1+t)^2\langle\nabla^{2m}u_t, \nabla^{2m}\partial_{\theta}(b\cdot\nabla u)\rangle
+(1+t)^2\langle\nabla^{2m}u_t,\nabla^{2m}\partial_{\theta}(u\cdot\nabla b)\rangle\\
&+(1+t)^2\langle\nabla^{2m}b_t, \nabla^{2m}(b\cdot\nabla u)_t\rangle
-(1+t)^2\langle\nabla^{2m}b_t,\nabla^{2m}(u\cdot\nabla b)_t\rangle\\
&-(1+t)^2\langle\nabla^{2m}b_t, \nabla^{2m}\partial_{\theta}(b\cdot\nabla b)\rangle
+(1+t)^2\langle\nabla^{2m}b_t,\nabla^{2m}\partial_{\theta}(u\cdot\nabla u)\rangle.
\end{align*}
Like (\ref{3.7}) in Lemma \ref{lem3.3}, our focus is to estimate the first term. After some adjustments, the remaining terms can be handled through the similar method. We will omit details here.
Using integrating by parts, Lemma \ref{lem2.2}, Sobolev imbedding theorem and (\ref{007}), for $m\geq 1$, one may deduce that
\begin{align*}
&(1+t)^2\langle\nabla^{2m}u_t, \nabla^{2m}(b\cdot\nabla b)_t\rangle\\
=&-(1+t)^2\langle\nabla^{2m+1}u_t, \nabla^{2m}(b\otimes b)_t\rangle\\
\lesssim&(1+t)^2\|u_t\|_{\dot{H}^{2m+1}}\big(\|b\|_{\dot{H}^{2m}}\|b_t\|_{L^\infty}
+\|b\|_{L^\infty}\|b_t\|_{\dot{H}^{2m}}\big)\\
\lesssim&(1+t)^2\|u_t\|_{\dot{H}^{2m+1}}\big(\|\partial_\theta b\|_{\dot{H}^{2m-2}}^{\frac{1}{3}}
\|\partial_\theta b\|_{\dot{H}^{2m+1}}^{\frac{2}{3}}
\|b_t\|_{L^2}^{\frac{2}{3}}\|b_t\|_{\dot{H}^3}^{\frac{1}{3}}\\
&+\|\partial_\theta b\|_{L^2}^{\frac{2}{3}}\|\partial_\theta b\|_{\dot{H}^3}^{\frac{1}{3}}
\|b_t\|_{\dot{H}^{2m-2}}^{\frac{1}{3}}
\|b_t\|_{\dot{H}^{2m+1}}^{\frac{2}{3}}\big).
\end{align*}
Hence
\begin{align*}
&\int_0^t(1+\tau)^2\langle\nabla^{2m}u_\tau, \nabla^{2m}(b\cdot\nabla b)_\tau\rangle\md\tau\\
\lesssim&\sup_{0\leq\tau\leq t}\|\partial_\theta b\|_{\dot{H}^{2m-2}}^{\frac{1}{3}}
\|b_\tau\|_{L^2}^{\frac{2}{3}}
\big(\int_0^t(1+\tau)^2\|u_\tau\|_{\dot{H}^{2m+1}}^2\md\tau\big)^{\frac{1}{2}}\\
&\times\big(\int_0^t(1+\tau)^2\|b_\tau\|_{\dot{H}^3}^2\md\tau\big)^{\frac{1}{6}}
\big(\int_0^t(1+\tau)^2\|\partial_\theta b\|_{\dot{H}^{2m+1}}^2\md\tau\big)^{\frac{1}{3}}\\
&+\sup_{0\leq\tau\leq t}\|b_\tau\|_{\dot{H}^{2m-2}}^{\frac{1}{3}}
\|\partial_\theta b\|_{L^2}^{\frac{2}{3}}
\big(\int_0^t(1+\tau)^2\|u_\tau\|_{\dot{H}^{2m+1}}^2\md\tau\big)^{\frac{1}{2}}\\
&\times\big(\int_0^t(1+\tau)^2\|b_\tau\|_{\dot{H}^{2m+1}}^2\md\tau\big)^{\frac{1}{3}}
\big(\int_0^t(1+\tau)^2\|\partial_\theta b\|_{\dot{H}^3}^2\md\tau\big)^{\frac{1}{6}}\\
\lesssim& e_0(t)^{\frac{1}{2}}e_1(t).
\end{align*}

For the higher-order term of $(1+t)^2N_1$, from incompressible constraints, Lemma \ref{lem2.2}, Lemma \ref{lem2.1} and (\ref{007}), one has
\begin{align*}
&(1+t)^2\langle\Delta^{2m}b_t, -\Delta G\rangle\\
=&(1+t)^2\langle\nabla^{2m+1}b_t, \nabla^{2m+2}(u\otimes b-b\otimes u)\rangle\\
\lesssim&(1+t)^2\|b_t\|_{\dot{H}^{2m+1}}\big(\|u\|_{L^\infty}\|b\|_{\dot{H}^{2m+2}}
+\|b\|_{L^\infty}\|u\|_{\dot{H}^{2m+2}}\big)\\
\lesssim&(1+t)^2\|b_t\|_{\dot{H}^{2m+1}}
\big(\|\partial_\theta u\|_{L^2}^{\frac{2}{3}}
\|\partial_\theta u\|_{\dot{H}^3}^{\frac{1}{3}}
\|\partial_\theta b\|_{\dot{H}^{2m+1}}^{\frac{2}{3}}
\|b\|_{\dot{H}^{2m+4}}^{\frac{1}{3}}\\
&+\|\partial_\theta b\|_{L^2}^\frac{2}{3}
\|\partial_\theta b\|_{\dot{H}^3}^\frac{1}{3}
\|\partial_\theta u\|_{\dot{H}^{2m+1}}^\frac{2}{3}
\|u\|_{\dot{H}^{2m+4}}^\frac{1}{3}\big).
\end{align*}
Therefore
\begin{align*}
&\int_0^t(1+\tau)^2\langle\Delta^{2m}b_\tau, -\Delta G\rangle\md\tau\\
\lesssim&\sup_{0\leq\tau\leq t}
\|\partial_\theta u\|_{L^2}^{\frac{2}{3}}\|b\|_{\dot{H}^{2m+4}}^{\frac{1}{3}}
\big(\int_0^t(1+\tau)^2\|b_\tau\|_{\dot{H}^{2m+1}}^2\md\tau\big)^{\frac{1}{2}}\\
&\times\big(\int_0^t(1+\tau)^2\|\partial_\theta u\|_{\dot{H}^3}^2\md\tau\big)^{\frac{1}{6}}
\big(\int_0^t(1+\tau)^2\|\partial_\theta b\|_{\dot{H}^{2m+1}}^2\md\tau\big)^{\frac{1}{3}}\\
&+\sup_{0\leq\tau\leq t}
\|\partial_\theta b\|_{L^2}^\frac{2}{3}\|u\|_{\dot{H}^{2m+4}}^\frac{1}{3}
\big(\int_0^t(1+\tau)^2\|b_\tau\|_{\dot{H}^{2m+1}}^2\md\tau\big)^{\frac{1}{2}}\\
&\times\big(\int_0^t(1+\tau)^2\|\partial_\theta b\|_{\dot{H}^3}^2\md\tau\big)^{\frac{1}{6}}
\big(\int_0^t(1+\tau)^2\|\partial_\theta u\|_{\dot{H}^{2m+1}}^2\md\tau\big)^{\frac{1}{3}}\\
\lesssim& e_0(t)^{\frac{1}{2}}e_1(t).
\end{align*}

The same idea holds for $(1+t)^2N_2$. We still put the same order derivative terms together
\begin{align*}
\begin{aligned}
&(1+t)^2\langle\Delta^{2m+1}u, F_t-\partial_{\theta} G\rangle+\langle\Delta^{2m+1}b, G_t-\partial_{\theta} F\rangle\\
=&-(1+t)^2\langle\nabla^{2m+1}u, \nabla^{2m+1}(b\cdot\nabla b)_t\rangle
+(1+t)^2\langle\nabla^{2m+1}u,\nabla^{2m+1}(u\cdot\nabla u)_t\rangle\\
&+(1+t)^2\langle\nabla^{2m+1}u, \nabla^{2m+1}\partial_{\theta}(b\cdot\nabla u)\rangle
-(1+t)^2\langle\nabla^{2m+1}u,\nabla^{2m+1}\partial_{\theta}(u\cdot\nabla b)\rangle\\
&-(1+t)^2\langle\nabla^{2m+1}b, \nabla^{2m+1}(b\cdot\nabla u)_t\rangle
+(1+t)^2\langle\nabla^{2m+1}b,\nabla^{2m+1}(u\cdot\nabla b)_t\rangle\\
&+(1+t)^2\langle\nabla^{2m+1}b, \nabla^{2m+1}\partial_{\theta}(b\cdot\nabla b)\rangle
-(1+t)^2\langle\nabla^{2m+1}b,\nabla^{2m+1}\partial_{\theta}(u\cdot\nabla u)\rangle.
\end{aligned}
\end{align*}
According to the same linear structure of $u$ and $b$ described in (\ref{3.1}) and the same regularity
in (\ref{3.6}), we only need to estimate one of the above terms.
Without loss of generality, we focus on estimating the first term, while the remaining terms can be dealt with similarly. By $\nabla\cdot b=0$, integrating by parts, Lemma \ref{lem2.2} and (\ref{007}), we have
\begin{align*}
&-(1+t)^2\langle\nabla^{2m+1}u, \nabla^{2m+1}(b\cdot\nabla b)_t\rangle\\
=&(1+t)^2\langle\nabla^{2m+2}u, \nabla^{2m+1}(b\otimes b)_t\rangle\\
\lesssim&(1+t)^2\|u\|_{\dot{H}^{2m+2}}\big(\|b\|_{\dot{H}^{2m+1}}\|b_t\|_{L^\infty}
+\|b\|_{L^\infty}\|b_t\|_{\dot{H}^{2m+1}}\big)\\
\lesssim&(1+t)^2\|\partial_\theta u\|_{\dot{H}^{2m+1}}^{\frac{2}{3}}\|u\|_{\dot{H}^{2m+4}}^{\frac{1}{3}}
\big(\|\partial_\theta b\|_{\dot{H}^{2m+1}}\|b_t\|_{L^2}^{\frac{2}{3}}\|b_t\|_{\dot{H}^3}^{\frac{1}{3}}
+\|\partial_\theta b\|_{L^2}^{\frac{2}{3}}\|\partial_\theta b\|_{\dot{H}^3}^{\frac{1}{3}}
\|b_t\|_{\dot{H}^{2m+1}}\big).
\end{align*}
Hence
\begin{align*}
&\int_0^t-(1+\tau)^2\langle\nabla^{2m+1}u, \nabla^{2m+1}(b\cdot\nabla b)_\tau\rangle\md\tau\\
\lesssim&\sup_{0\leq\tau\leq t}\|b_\tau\|_{L^2}^{\frac{2}{3}}\|u\|_{\dot{H}^{2m+4}}^{\frac{1}{3}}
\big(\int_0^t(1+\tau)^2\|\partial_\theta u\|_{\dot{H}^{2m+1}}^2\md\tau\big)^{\frac{1}{3}}\\
&\times\big(\int_0^t(1+\tau)^2\|\partial_\theta b\|_{\dot{H}^{2m+1}}^2\md\tau\big)^{\frac{1}{2}}
\big(\int_0^t(1+\tau)^2\|b_\tau\|_{\dot{H}^3}^2\md\tau\big)^{\frac{1}{6}}\\
&+\sup_{0\leq\tau\leq t}\|\partial_\theta b\|_{L^2}^{\frac{2}{3}}
\|u\|_{\dot{H}^{2m+4}}^{\frac{1}{3}}
\big(\int_0^t(1+\tau)^2\|\partial_\theta u\|_{\dot{H}^{2m+1}}^2\md\tau\big)^{\frac{1}{3}}\\
&\times\big(\int_0^t(1+\tau)^2\|\partial_\theta b\|_{\dot{H}^3}^2\md\tau\big)^{\frac{1}{6}}
\big(\int_0^t(1+\tau)^2\|b_\tau\|_{\dot{H}^{2m+1}}^2\md\tau\big)^{\frac{1}{2}}\\
\lesssim& e_0(t)^{\frac{1}{2}}e_1(t).
\end{align*}

For the remaining term of $(1+t)^2N_2$, using $\nabla\cdot u=0$, and Lemma \ref{lem2.2}, we have
\begin{align}\label{mild}
\begin{aligned}
&(1+t)^2\langle\Delta^{2m+1}b, \Delta G\rangle\\
=&(1+t)^2\langle\nabla^{2m+2}b, \nabla^{2m+2}(b\cdot\nabla u-u\cdot\nabla b)\rangle\\
=&(1+t)^2\langle\nabla^{2m+2}b, \nabla^{2m+2}(b\cdot\nabla u)\rangle
-(1+t)^2\langle\nabla^{2m+2}b, [\nabla^{2m+2},u\cdot\nabla]b\rangle\\
\lesssim&(1+t)^2\|b\|_{\dot{H}^{2m+2}}\big(\|b\|_{\dot{H}^{2m+2}}\|\nabla u\|_{L^\infty}
+\|u\|_{\dot{H}^{2m+2}}\|\nabla b\|_{L^\infty}
+\|b\|_{L^\infty}\|\nabla u\|_{\dot{H}^{2m+2}}\big).
\end{aligned}
\end{align}
For the first two terms in the last line of the above equation,
using interpolation inequalities, Lemma \ref{lem2.1} and (\ref{007}), we obtain
\begin{align*}
&\int_0^t(1+\tau)^2\|b\|_{\dot{H}^{2m+2}}\big(\|b\|_{\dot{H}^{2m+2}}\|\nabla u\|_{L^\infty}
+\|u\|_{\dot{H}^{2m+2}}\|\nabla b\|_{L^\infty}\big)\md\tau\\
\lesssim&\int_0^t(1+\tau)^2\|b\|_{\dot{H}^{2m+2}}
\big(\|\partial_\theta b\|_{\dot{H}^{2m+1}}^{\frac{1}{2}}
\|\partial_\theta b\|_{\dot{H}^{2m+3}}^{\frac{1}{2}}
\|\partial_\theta u\|_{\dot{H}^1}^{\frac{1}{2}}
\|\partial_\theta u\|_{\dot{H}^3}^{\frac{1}{2}}\\
&+\|\partial_\theta u\|_{\dot{H}^{2m+1}}^{\frac{1}{2}}
\|\partial_\theta u\|_{\dot{H}^{2m+3}}^{\frac{1}{2}}
\|\partial_\theta b\|_{\dot{H}^1}^{\frac{1}{2}}
\|\partial_\theta b\|_{\dot{H}^3}^{\frac{1}{2}}\big)\md\tau\\
\lesssim&\sup_{0\leq\tau\leq t}(1+\tau)\|b\|_{\dot{H}^{2m+2}}
\big(\int_0^t(1+\tau)^2\|\partial_\theta b\|_{\dot{H}^{2m+1}}^2\md\tau\big)^{\frac{1}{4}}
\big(\int_0^t\|\partial_\theta b\|_{\dot{H}^{2m+3}}^2\md\tau\big)^{\frac{1}{4}}\\
&\times\big(\int_0^t\|\partial_\theta u\|_{\dot{H}^1}^2\md\tau\big)^{\frac{1}{4}}
\big(\int_0^t(1+\tau)^2\|\partial_\theta u\|_{\dot{H}^3}^2\md\tau\big)^{\frac{1}{4}}\\
&+\sup_{0\leq\tau\leq t}(1+\tau)\|b\|_{\dot{H}^{2m+2}}
\big(\int_0^t(1+\tau)^2\|\partial_\theta u\|_{\dot{H}^{2m+1}}^2\md\tau\big)^{\frac{1}{4}}
\big(\int_0^t\|\partial_\theta u\|_{\dot{H}^{2m+3}}^2\md\tau\big)^{\frac{1}{4}}\\
&\times\big(\int_0^t\|\partial_\theta b\|_{\dot{H}^1}^2\md\tau\big)^{\frac{1}{4}}
\big(\int_0^t(1+\tau)^2\|\partial_\theta b\|_{\dot{H}^3}^2\md\tau\big)^{\frac{1}{4}}\\
\lesssim&e_0(t)^{\frac{1}{2}}e_1(t).
\end{align*}
For the last term in the last line of (\ref{mild}), the most wild term, we shall carefully balance the indicators between the time-weight $(1+t)^2$ and interpolation inequalities, so that the term can be controlled by the energies defined in Section 2. Using interpolation inequalities,
Lemma \ref{lem2.1} and (\ref{007}), we can obtain
\begin{align*}
&(1+t)^2\|b\|_{\dot{H}^{2m+2}}\|b\|_{L^\infty}\|\nabla u\|_{\dot{H}^{2m+2}}\\
\lesssim&(1+t)^2\|\partial_\theta b\|_{\dot{H}^{2m+2}}^{\frac{2}{9}}
\|\partial_\theta b\|_{\dot{H}^{2m+1}}^{\frac{2}{3}}
\|b\|_{\dot{H}^{2m+8}}^{\frac{1}{9}}
\|b\|_{\dot{H}^{-\sigma}}^{\frac{2}{3+\sigma}}
\|b\|_{\dot{H}^3}^{\frac{1+\sigma}{3+\sigma}}
\|\partial_\theta u\|_{\dot{H}^{2m+1}}^{\frac{3}{4}}
\|u\|_{\dot{H}^{2m+9}}^{\frac{1}{4}}\\
=&C(1+t)^{\frac{2}{9}}\|\partial_\theta b\|_{\dot{H}^{2m+2}}^{\frac{2}{9}}
(1+t)^{\frac{2}{3}}\|\partial_\theta b\|_{\dot{H}^{2m+1}}^{\frac{2}{3}}
\|b\|_{\dot{H}^{2m+8}}^{\frac{1}{9}}
\|b\|_{\dot{H}^{-\sigma}}^{\frac{2}{3+\sigma}}
(1+t)^{\frac{1+\sigma}{3+\sigma}-\frac{1}{3}}
\|b\|_{\dot{H}^3}^{\frac{1+\sigma}{3+\sigma}-\frac{1}{3}}\\
&\times(1+t)^{\frac{1}{3}}\|\partial_\theta b\|_{\dot{H}^3}^{\frac{1}{3}}
(1+t)^{\frac{3}{4}}\|\partial_\theta u\|_{\dot{H}^{2m+1}}^{\frac{3}{4}}
\|\nabla u\|_{\dot{H}^{2m+8}}^{\frac{1}{4}}.
\end{align*}
Note that
\begin{align*}
\|b\|_{\dot{H}^3}^{\frac{1+\sigma}{3+\sigma}-\frac{1}{3}}
\lesssim\|\partial_\theta b\|_{\dot{H}^2}^{\frac{1+\sigma}{2(3+\sigma)}-\frac{1}{6}}
\|b\|_{\dot{H}^4}^{\frac{1+\sigma}{2(3+\sigma)}-\frac{1}{6}},
\end{align*}
therefore
\begin{align*}
&\int_0^t(1+\tau)^2\|b\|_{\dot{H}^{2m+2}}\|b\|_{L^\infty}
\|\nabla u\|_{\dot{H}^{2m+2}}\md\tau\\
\lesssim&\sup_{0\leq\tau\leq t}
(1+\tau)^{\frac{2}{9}}\|\partial_\theta b\|_{\dot{H}^{2m+2}}^{\frac{2}{9}}
\|b\|_{H^{2s+6}}^{\frac{1}{9}}
\|b\|_{\dot{H}^{-\sigma}}^{\frac{2}{3+\sigma}}
(1+\tau)^{\frac{1+\sigma}{2(3+\sigma)}-\frac{1}{6}}
\|\partial_\theta b\|_{\dot{H}^2}^{\frac{1+\sigma}{2(3+\sigma)}-\frac{1}{6}}\\
&\times
(1+\tau)^{\frac{1+\sigma}{2(3+\sigma)}-\frac{1}{6}}
\|b\|_{\dot{H}^4}^{\frac{1+\sigma}{2(3+\sigma)}-\frac{1}{6}}
\big(\int_0^t(1+\tau)^2\|\partial_\theta b\|_{\dot{H}^3}^2\md\tau\big)^{\frac{1}{6}}
\big(\int_0^t(1+\tau)^2\|\partial_\theta b\|_{\dot{H}^{2m+1}}^2\md\tau\big)^{\frac{1}{3}}\\
&\times
\big(\int_0^t(1+\tau)^2\|\partial_\theta u\|_{\dot{H}^{2m+1}}^2\md\tau\big)^{\frac{3}{8}}
\big(\int_0^t\|\nabla u\|_{H^{2s+6}}^2\md\tau\big)^{\frac{1}{8}}\\
\lesssim&E_0(t)^{\frac{13}{72}}
E_1(t)^{\frac{1}{3+\sigma}}
e_1(t)^{\frac{1+\sigma}{2(3+\sigma)}+\frac{59}{72}},
\end{align*}
provided $\frac{3}{23}\leq\sigma<1$.

Integrating (\ref{3.6}) respect to time and invoking the above estimates yields
\begin{align*}
\begin{aligned}
&\sup_{0\leq\tau\leq t}(1+\tau)^2
\big(\|u_t,b_t\|_{\dot{H}^{2m}}^2+\|\partial_\theta u,\partial_\theta b\|_{\dot{H}^{2m}}^2
+\|u,b\|_{\dot{H}^{2m+2}}^2\big)\\
&+\int_0^t(1+\tau)^2\big(\|u_\tau, b_\tau\|_{\dot{H}^{2m+1}}^2
+\|\partial_\theta u,\partial_\theta b\|_{\dot{H}^{2m+1}}^2\big)\md\tau\\
\lesssim&\varepsilon^2+e_0(t)
+e_0(t)^{\frac{1}{2}}e_1(t)
+E_0(t)^{\frac{13}{72}}
E_1(t)^{\frac{1}{3+\sigma}}
e_1(t)^{\frac{1+\sigma}{2(3+\sigma)}+\frac{59}{72}}.
\end{aligned}
\end{align*}
This finishes the proof of this lemma.
\end{proof}

\section{Proof of Theorem \ref{thm0.1}}

With all the energy estimates provided above, we are now ready to complete the proof of Theorem \ref{thm0.1}. To do this, we first define the total energy:
\begin{align*}
E(t)=E_0(t)+E_1(t)+e_0(t)+e_1(t).
\end{align*}
Multiplying $(\ref{1})$, $(\ref{2})$, $(\ref{3})$ and $(\ref{4})$ by different suitable numbers and combining them, we obtain the following \emph{a priori} estimate
\begin{align}\label{5}
E(t)\leq CE(0)+CE(t)^{\frac{3}{2}},
\end{align}
for some positive constant $C$.

Once we have (\ref{5}), along with the initial condition (\ref{**}) and the application of the bootstrapping argument, we can conclude that
\begin{align*}
E(t)\lesssim E(0),
\end{align*}
for all $t>0$.

The establishment of unique local smooth solutions follows a standard procedure (see the book \cite{T2001, MB2002} for further details). We then complete the proof of Theorem \ref{thm0.1}.

\vspace{.2in}

\textbf{Acknowledgements}
The research of Yi Zhou
was supported by the National Natural Science Foundation
of China (No. 12171097), Shanghai Science and Technology Program (No. 21JC1400600)
and Shanghai Key Laboratory
for Contemporary Applied Mathematics, Fudan
University.
The authors sincerely thank Prof. Yi Zhu for helpful discussions.

\end{document}